\newtheorem{theorem}{Theorem}[section]
\newtheorem{lemma}[theorem]{Lemma}
\newtheorem{corollary}[theorem]{Corollary}
\theoremstyle{definition}
\newtheorem{definition}[theorem]{Definition}
\theoremstyle{remark}
\numberwithin{equation}{section}
\def\bfa{{\mathbf a}}
\def\bfb{{\mathbf b}}
\def\bfc{{\mathbf c}}
\def\bfx{{\mathbf x}}
\def\calA{{\mathcal A}}  
\def\calB{{\mathcal B}}
\def\Hhat{\widehat H}
\def\dbC{{\mathbb C}}
\def\dbR{{\mathbb R}}
\def\dbZ{{\mathbb Z}}
\def\grB{{\mathfrak B}}
\def\grJ{{\mathfrak J}}
\def\grm{{\mathfrak m}}\def\grM{{\mathfrak M}}\def\grN{{\mathfrak N}}
\def\grn{{\mathfrak n}}\def\grS{{\mathfrak S}}\def\grP{{\mathfrak P}}
\def\grB{{\mathfrak B}}
\def\grK{{\mathfrak K}}
\def\grv{{\mathfrak v}}\def\grV{{\mathfrak V}}
\def\alp{{\alpha}} \def\bfalp{{\boldsymbol \alpha}}
\def\bet{{\beta}}  \def\bfbet{{\boldsymbol \beta}}
\def\gam{{\gamma}} 
\def\del{{\delta}}
\def\tet{{\theta}} \def\bftet{{\boldsymbol \theta}} 
\def\kap{{\kappa}}
\def\bfnu{{\boldsymbol \nu}}
\def\sig{{\sigma}}  \def\bfsig{{\boldsymbol \sig}}
\def\ome{{\omega}}  \def\bfome{{\boldsymbol \ome}}
\def\d{{\partial}}
\def\eps{\varepsilon}
\def\le{\leqslant} \def\ge{\geqslant}
\def\d{{\,{\rm d}}}
\begin{document}
\title[Systems of diagonal cubic forms]{The Hasse principle for systems\\ of diagonal cubic forms}
\author[J\"org Br\"udern]{J\"org Br\"udern}
\address{Mathematisches Institut, Bunsenstrasse 3--5, D-37073 G\"ottingen, Germany}
\email{bruedern@uni-math.gwdg.de}
\author[Trevor D. Wooley]{Trevor D. Wooley}
\address{School of Mathematics, University of Bristol, University Walk, Clifton, Bristol BS8 1TW, United Kingdom}
\email{matdw@bristol.ac.uk}
\subjclass[2010]{11D72, 11P55, 11E76}
\keywords{Cubic Diophantine equations, Hardy-Littlewood method.}
\thanks{The authors are grateful to the Hausdorff Research Institute for Mathematics in 
Bonn for excellent working conditions that made the writing of this paper feasible. The 
support of the Akademie der Wissenschaften zu G\"ottingen is also gratefully 
acknowledged.}
\date{}
\begin{abstract} We establish the Hasse Principle for systems of $r$ simultaneous diagonal cubic equations
 whenever the number of variables exceeds $6r$ and the associated coefficient matrix contains no singular
 $r\times r$ submatrix, thereby achieving the theoretical limit of the circle method for such systems.
\end{abstract}
\maketitle

\section{Introduction} The Diophantine analysis of systems of diagonal equations was pioneered by
 Davenport and Lewis with a pivotal contribution on pairs of cubic forms \cite{DL1966}, followed by work
 on more general systems \cite{DL1969}. For natural numbers $r,s$ and an $r\times s$ integral matrix
 $(c_{ij})$, they applied the circle method to the system
\begin{equation}\label{1.1}
\sum_{j=1}^sc_{ij}x_j^3=0\quad (1\le i\le r),
\end{equation}
and when $s\ge 27r^2\log 9r$ were able to show that
 (\ref{1.1}) has infinitely many primitive integral solutions. Even a casual practitioner in the field will
 acknowledge that the implicit use of mean values demands at least $6r+1$ variables in the system for the
 circle method to be applicable. We now attain this theoretical limit, surmounting the obstacles encountered
 by previous writers.

\begin{theorem}\label{theorem1.1} Let $s>6r$ and suppose that the matrix $(c_{ij})$ contains no singular
 $r\times r$ submatrix. Then, whenever the system (\ref{1.1}) has non-zero $p$-adic solutions for all 
primes $p$, it has infinitely many primitive integral solutions.
\end{theorem}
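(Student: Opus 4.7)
The plan is to deploy the Hardy-Littlewood circle method at the theoretical threshold $s > 6r$. Define
\begin{equation*}
f_j(\bfalp) = \sum_{1 \le x \le P} e\!\left(x^3 \sum_{i=1}^r c_{ij}\alpha_i\right) \qquad (1 \le j \le s),
\end{equation*}
so that the count $N(P)$ of solutions with $1 \le x_j \le P$ equals $\int_{[0,1)^r} \prod_{j=1}^s f_j(\bfalp)\, d\bfalp$. Partition the torus into major arcs $\calM$, narrow neighbourhoods of rationals $\bfa/q$ with $q$ small, and their complement $\grm$. The target is an asymptotic $N(P) \sim C P^{s-3r}$ with $C > 0$, from which infinitely many primitive solutions follow on removing the lattice of $p$-adic obstructions in standard fashion.

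The major arc contribution is handled by approximating each $f_j$ through Gauss sums in $\bfc_j \cdot \bfa$ (where $\bfc_j$ denotes the $j$-th column of $(c_{ij})$) and the archimedean cubic integral, producing a product $\mathfrak{S}\,\mathfrak{J}\,P^{s-3r}$. The non-singular $r\times r$ submatrix hypothesis is crucial both for convergence of the singular series and, jointly with the $p$-adic solubility assumption, for forcing $\mathfrak{S} > 0$; the same rank hypothesis also secures $\mathfrak{J} > 0$. This portion closely follows established templates.

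The heart of the matter lies on $\grm$. Applying Hua's lemma $\int_0^1 |f|^6\, d\alp \ll P^{3+\eps}$ to $6r$ factors gives only $\int |f_1 \cdots f_{6r}|\, d\bfalp \ll P^{3r+\eps}$, which combined with the trivial bound on the remaining factor barely matches $P^{s-3r}$ and leaves no $\eps$-room whatsoever. The plan is to reserve one generating function, say $f_{j_0}$, for pointwise Weyl-type estimation; the non-singular submatrix hypothesis is precisely what ensures that for every $\bfalp \in \grm$ one can locate a column $j_0$ for which the linear form $\sum_i c_{ij_0}\alpha_i$ admits no rational approximation with small denominator, whence Weyl's inequality yields $f_{j_0}(\bfalp) \ll P^{1-\tau}$ for some $\tau > 0$. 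To absorb the $P^{\eps}$ loss inherent in Hua's bound, the natural device is to replace $f_j$ on the other indices by smooth cubic Weyl sums $g_j$ supported on $P^{\eta}$-smooth cubes, for which Vaughan's smooth-number methods deliver slightly supercritical sixth moments $\int |g|^{6+\delta}\, d\alp \ll P^{3+\delta}$ with no $\eps$-loss; a standard pruning/comparison argument then transfers these savings back to the original $f_j$.

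The principal obstacle is exactly this minor arc treatment at the threshold. With essentially no $\eps$-room to spare, the pruning must be engineered in several nested levels of intermediate arcs, together with an adaptive choice of the reserved column $j_0$ as $\bfalp$ varies across $\grm$. Coordinating this stratification with the non-singular submatrix hypothesis --- guaranteeing that a suitable $j_0$ persists at every level without degrading the mean-value inputs for the remaining variables --- is where the real technical novelty beyond Davenport--Lewis must lie.
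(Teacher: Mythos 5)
Your broad framework --- circle method, narrow major arcs, Weyl-type pointwise saving on one reserved factor, smooth Weyl sums $g$ on other factors --- is aligned with the paper's. But the proposal rests on a mean-value estimate that does not exist, and this is where the whole argument collapses.

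You invoke ``slightly supercritical sixth moments $\int_0^1 |g|^{6+\delta}\,d\alpha \ll P^{3+\delta}$ with no $\eps$-loss'' as if these are available from Vaughan's smooth-number methods. They are not. The essentially optimal sixth moment $\int |g|^6 \ll P^{3+\eps}$ (let alone its strengthening to a $(6+\delta)$-th moment with exponent $3+\delta$) is a major open problem, equivalent in strength to versions of Hooley's Hypothesis $K^*$ for cubes. What is actually known --- and what the paper uses as Lemma~3.1, citing Wooley (2000) --- is $\int_0^1 |g(\alpha;P,R)|^6\,d\alpha \ll P^{3+\xi}$ with $\xi = \tfrac14 - \tau$ for a tiny positive $\tau$, i.e.\ an exponent barely below $3.25$, not anywhere near $3$. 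That quarter of a power is a large loss, not an $\eps$-loss, and absorbing it is precisely the central difficulty.

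The paper's actual resolution of this difficulty is the content you have not supplied: a recursive ``complification'' device (\S3) that repeatedly applies Cauchy--Schwarz to relate the $r$-dimensional mean value $K(P;C)$ to a one-dimensional sixth moment of $g$ on the one hand and to a much larger linked-block system on the other, where the carefully structured \emph{congenial} matrices of \S2 allow near square-root cancellation via Weyl differencing and a delicate induction (Lemma~2.4). The net effect is that the $P^{\xi}$ loss from the sixth moment is paid exactly once rather than $r$ times, yielding $K(P;C) \ll P^{3r+\xi+\eps}$ (Theorem~3.3). Combined with Weyl's inequality $|f| \ll P^{3/4+\eps}$ on one reserved factor, this beats the trivial minor-arc bound because $\xi < 1/4$. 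Without this machinery, and with the nonexistent moment estimate you posit replaced by the true $P^{3+\xi}$, your sketch cannot close: applying the sixth moment to each of $r$ linear combinations would cost $P^{r\xi}$, which is far too much. The pruning, the adaptive choice of $j_0$, and the organization of the major arcs are all standard and not the obstruction; the obstruction is exactly the multi-dimensional mean value at the $6r$-variable threshold, and you have not addressed it.
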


The conclusion of Theorem \ref{theorem1.1} may be interpreted as a Hasse principle for
 systems of diagonal cubic forms in general position. As we remark in \S4, the condition on the matrix of
 coefficients can be relaxed considerably. Should the local solubility conditions be met, our methods show
 that the number $N(P)$ of integral solutions of (\ref{1.1}) with $\bfx\in [-P,P]^s$ satisfies 
$N(P)\gg P^{s-3r}$. We note that work of the first author joint with Atkinson and Cook \cite{ABC1992}
 implies that for $p>9^{r+1}$ the $p$-adic solubility hypothesis in Theorem \ref{theorem1.1} is void.
\par

Early work on this subject concentrated on methods designed to disentangle the system so as to invoke
 results on single equations. The most recent such contribution is Br\"udern and Cook \cite{BC1992} where 
the condition $s>7r$ is imposed on the number of variables. Such methods are incapable of establishing the 
conclusion of Theorem \ref{theorem1.1} unless one is prepared to invoke conditional mean value estimates
 that depend on speculative Riemann hypotheses for global Hasse-Weil $L$-functions (see Hooley 
\cite{Hoo1986, Hoo1996} and Heath-Brown \cite{HB1998}).\par

When $r=1$, the conclusion of Theorem \ref{theorem1.1} is due to Baker \cite{Bak1989}. For $r\ge 2$,
 the present authors \cite{BW2002} identified features of fully entangled systems of equations which permit 
highly efficient use of divisor estimates in bounding associated multidimensional mean values. These allow 
treatment of systems in $6r+3$ variables. By a method special to the case $r=2$, we established that case 
of Theorem \ref{theorem1.1} in more general form (see \cite{BW2007}). In this paper we instead develop 
a recursive process that relates mean values associated with the original system to a one-dimensional sixth 
moment of a smooth Weyl sum on the one hand, and on the other to another system of the shape 
(\ref{1.1}), but of much larger format. The new system is designed in such a way that the methods of 
\cite{BW2002} provide very nearly square-root cancellation. By comparison with older routines, we are
 forced to incorporate the losses implied by the use of a sixth moment of a smooth Weyl sum only once, as 
opposed to $r$ times (in \cite{BC1992}, for example).\par

Our recursive process relies on an analytic inequality that is simple to describe. Suppose that $1\le r<R$, 
and that $G(\alp_1,\ldots ,\alp_r)$ and $F(\alp_1,\ldots ,\alp_R)$ are exponential sums, and consider the 
integral
$$I(F,G)=\int_0^1\ldots \int_0^1 G(\alp_1,\ldots ,\alp_r)F(\alp_1,\ldots ,\alp_R)\d\alp_1 \ldots \d\alp_R .
$$
Then by Schwarz's inequality, one finds that $I(F,G)^2\le I_1I_2$, with
\begin{equation}\label{1.2}
I_1=\int_0^1\ldots \int_0^1\biggl| \int_0^1\ldots \int_0^1 F(\alp_1,\ldots ,\alp_R)\d\alp_{r+1}\ldots 
\d\alp_R \biggr|^2\d\alp_1\ldots \d\alp_r
\end{equation}
and
\begin{equation}\label{1.3}
I_2=\int_0^1\ldots \int_0^1 |G(\alp_1,\ldots ,\alp_r)|^2\d\alp_1\ldots \d\alp_r.
\end{equation}
In our application of this inequality, the integral $I(F,G)$ will count the number of solutions of a system of 
$R$ linear equations to be solved in integral cubes. We shall take $r=1$ and $G(\alp_1)$ equal to an 
exponential sum related to sums of three cubes. Then, the mean square (\ref{1.3}) is a sixth moment of 
cubic Weyl sums for which strong bounds are available. Also, on opening the square in $I_1$, a Diophantine 
interpretation of (\ref{1.2}) with $2R-r$ equations is induced. It transpires that this procedure can be 
repeated, achieving a satisfactory bound for $I(F,G)$ whenever a good bound for the mean square 
(\ref{1.3}) is partnered with good control for the high-dimensional iterates of $I_1$ that arise from the 
recursion. While inspired by the work of Gowers \cite{Gow2001}, the procedure sketched here is in 
principle very flexible. For example, variants may be developed involving higher moments.\par

This paper is organised as follows. We begin in \S2 by describing the linked block matrices underpinning
 our new mean value estimates. By using an argument motivated by our earlier work \cite{BW2002}, we
 derive strong estimates associated with Diophantine systems having six times as many variables as
 equations. Next, in \S3, by repeated application of Schwarz's inequality, we transform an initial system of
 equations into a more complicated system of the type just analysed. Thus, a powerful mean value estimate
 is obtained that leads in \S4 via the circle method to the proof of Theorem \ref{theorem1.1}.\par

Our basic parameter is $P$, a sufficiently large positive number. In this paper, implicit constants in
 Vinogradov's notation $\ll$ and $\gg$ may depend on $s$, $r$ and $\eps$, as well as ambient coefficients.
 Whenever $\eps$ appears in a statement, either implicitly or explicitly, we assert that the statement holds
 for each $\eps>0$. We employ the convention that whenever $G:[0,1)^k\rightarrow \dbC$ is integrable,
 then
$$\oint G(\bfalp)\d\bfalp =\int_{[0,1)^k}G(\bfalp)\d\bfalp .$$
Here and elsewhere, we use vector notation in the natural way. Finally, we write $e(z)$ for $e^{2\pi iz}$
 and put $\|\tet\|=\min\{|\tet-m|:m\in \dbZ\}$.

\section{Auxiliary equations} We begin by defining a strong form of non-singularity satisfied by almost all
 coefficient matrices. We refer to an $r\times s$ matrix $A$ as {\it highly non-singular} when any subset of 
at most $r$ columns of $A$ is linearly independent. For example, the matrix
$$B={\tiny{\left( \arraycolsep=1.6pt\begin{array}{*{9}c}
7&1&4&8&8&4&9&8&1\\
7&5&6&3&3&7&1&7&8\\
9&4&5&7&1&6&5&3&6\\
6&3&3&8&8&6&9&9&3\end{array}\right)}}$$
is highly non-singular, as the reader may care to check.

\begin{lemma}\label{lemma2.1} Suppose that the matrix $A$ is highly non-singular. Then the submatrix  
obtained by deleting a column is highly non-singular. Also, if a column of $A$ contains just one non-zero 
element, then the submatrix obtained by deleting the column and row containing this element is highly 
non-singular.\end{lemma}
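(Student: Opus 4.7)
The plan is to verify each assertion by unpacking the definition of high non-singularity, namely that any collection of at most $r$ columns of $A$ is linearly independent.

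For the first assertion, the argument is essentially set-theoretic. After deleting a single column from the $r\times s$ matrix $A$, every subset of at most $r$ columns of the resulting $r\times(s-1)$ matrix is \emph{a fortiori} a subset of columns of $A$, and hence linearly independent by hypothesis. No further work is required.

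For the second assertion, let column $j$ of $A$ have its unique nonzero entry in row $i$, and let $A''$ denote the $(r-1)\times(s-1)$ submatrix obtained by deleting that column and that row. To verify high non-singularity of $A''$, I would take an arbitrary collection of $k\le r-1$ columns of $A''$, say the restrictions $\bfa''_{j_1},\ldots,\bfa''_{j_k}$ of columns $\bfa_{j_1},\ldots,\bfa_{j_k}$ of $A$, and attempt to lift any putative linear dependence back to the full matrix. Specifically, if $\sum_l\alpha_l\bfa''_{j_l}=\mathbf 0$, then the vector $\sum_l\alpha_l\bfa_{j_l}$ in $\dbR^r$ has all coordinates zero outside row $i$ and hence equals some scalar multiple $\lambda\bfa_j$ of the $j$-th column. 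This yields a linear relation among the $k+1\le r$ columns $\bfa_{j_1},\ldots,\bfa_{j_k},\bfa_j$, which by the hypothesis on $A$ forces $\alpha_1=\cdots=\alpha_k=\lambda=0$.

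There is no genuine obstacle: the only point to notice is that the single-nonzero-entry hypothesis is exactly what permits the lift of a dependence in $A''$ back to $A$ using just one extra column, keeping the total $k+1\le r$ within the regime where the high non-singularity assumption applies.
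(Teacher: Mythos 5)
Your argument is correct and is simply a detailed unpacking of what the paper dispatches in one line ("both conclusions follow from the definition"). The first claim is immediate, and for the second the lifting of a dependence in $A''$ to a relation among $k+1\le r$ columns of $A$ (the $k$ lifted columns together with the deleted column $\bfa_j$) is exactly the right observation; since those columns are all distinct and number at most $r$, high non-singularity of $A$ forces all coefficients, including $\lambda$, to vanish.
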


\begin{proof} Both conclusions follow from the definition of highly non-singular.\end{proof}

Next we describe linked block matrices critical to our arguments. Even to describe the 
shape of these matrices takes some effort. When $n$ is a non-negative integer and 
$0\le l\le n$, consider natural numbers $r_l,s_l$ and an $r_l\times s_l$ matrix $A_l$ 
having non-zero columns. Let $\text{diag}(A_0,A_1,\ldots ,A_n)$ be the conventional 
diagonal block matrix with the lower right hand corner of $A_l$ sited at $(i_l,j_l)$. For 
$1\le l\le n$, append a row to the top of the matrix $A_l$, giving an $(r_l+1)\times s_l$ 
matrix $B_l$. Next, consider the matrix $D=(d_{ij})$ obtained from 
$\text{diag}(A_0,\ldots ,A_n)$ by replacing  $A_l$ by $B_l$ for $1\le l\le n$, with the 
lower right hand corner of $B_l$ still sited at $(i_l,j_l)$. This new {\it linked-block} matrix 
$D$ should be thought of as a matrix with additional entries by comparison to 
$\text{diag}(A_0,\ldots ,A_n)$, with the property that adjacent blocks are glued together 
by a shared row sited at index $i_l$, for $0\le l<n$.

\begin{definition}\label{definition2.2} We say that the linked block matrix $D$ is {\it congenial of type} 
$(n,r;\rho,u,t)$ when it has the shape described above, and
\begin{itemize}
\item[(a)] $A_l$ and $B_l$ are highly non-singular, with $B_l$ of format $r\times 3(r-1)$, 
for $1\le l\le n$;
\item[(b)] $A_0$ is a $\rho \times t$ matrix having the following properties:
\begin{itemize}
\item[(i)] when $\rho\ge 2$, its first $u$ columns define a subspace of dimension $1$ 
distinct from the $\rho$-th coordinate axis;
\item[(ii)] the matrix of its last $t-u+1$ columns is highly non-singular;
\item[(iii)] if $u\ge 3$, then $t\ge 3\rho$.
\end{itemize}
\end{itemize}
\end{definition}

As a helping hand to the reader, we illustrate this definition with an example. Thus the 
matrix\footnote{Henceforth we adopt the convention that zero entries in a matrix are left 
blank.} 
$${\tiny{\left( \arraycolsep=1.6pt\begin{array}{*{36}c}
1&3&3&3&7&1&7&8&&&&&&&&&&&&&&&&&&&&&&&&\\
&&7&1&6&5&3&6&&&&&&&&&&&&&&&&&&&&&&&&\\
&&8&8&6&9&9&3&7&1&4&8&8&4&9&8&1&&&&&&&&&&&&&&&&\\
&&&&&&&&7&5&6&3&3&7&1&7&8&&&&&&&&&&&&&&&&\\
&&&&&&&&9&4&5&7&1&6&5&3&6&&&&&&&&&&&&&&&&\\
&&&&&&&&6&3&3&8&8&6&9&9&3&7&1&4&8&8&4&9&8&1&&&&&&&&\\
&&&&&&&&&&&&&&&&&7&5&6&3&3&7&1&7&8&&&&&&&&\\
&&&&&&&&&&&&&&&&&9&4&5&7&1&6&5&3&6&&&&&&&&\\
&&&&&&&&&&&&&&&&&6&3&3&8&8&6&9&9&3&7&1&4&8&8&4&9&8&1\\
&&&&&&&&&&&&&&&&&&&&&&&&&&7&5&6&3&3&7&1&7&8\\
&&&&&&&&&&&&&&&&&&&&&&&&&&9&4&5&7&1&6&5&3&6\\
&&&&&&&&&&&&&&&&&&&&&&&&&&6&3&3&8&8&6&9&9&3
\end{array}\right) }}$$
is congenial of type $(3,4;3,2,8)$. In terms of the description above, one sees that
$$A_0={\tiny{ \left( \arraycolsep=1.6pt\begin{array}{*{8}c}
1&3&3&3&7&1&7&8\\
&&7&1&6&5&3&6\\
&&8&8&6&9&9&3\end{array}\right) }}$$
and $B_1=B_2=B_3=B$, and further $A_1=A_2=A_3=A$, with
$$A={\tiny{\left( \arraycolsep=1.6pt\begin{array}{*{9}c}
7&5&6&3&3&7&1&7&8\\
9&4&5&7&1&6&5&3&6\\
6&3&3&8&8&6&9&9&3\end{array}\right)}}.$$

Some additional remarks are in order to clarify this definition. With an inductive argument 
in mind, we allow the possibility that $n=0$, in which case the parameter $r$ plays no 
role. Note that when $n\ge 1$, the definition is non-empty only when $r\ge 2$. In 
preparation for our inductive argument, once again, we allow the possibility that $A_0$ is 
the empty matrix formally considered to have format $1\times 0$, and we accommodate 
this situation by identifying congenial matrices (formally) of type $(n,r;1,0,0)$ with those 
of type $(n-1,r;r,1,3r-3)$. Here, as we shall see, there is no loss of generality in assuming 
that the columns in the first non-empty block of $D$ have been permuted in order to 
ensure that its first column is distinct from the $r$-th coordinate axis. When $t\ge 1$, we 
insist that $u\ge 1$, consistent with the hypothesis that the last $t-u+1$ columns of $A_0$ 
be highly non-singular. Also, we note that when $\rho=1$, the conditions imposed in the 
preamble to Definition \ref{definition2.2} require that $d_{1j}\ne 0$ for $1\le j\le t$, and 
(b) is then satisfied for all $1\le u\le t$. When $\rho\ge 2$, meanwhile, the value of $u$ is 
uniquely determined by the conditions in (b).\par

Our goal in this section is to obtain mean value estimates corresponding to auxiliary 
equations having congenial coefficient matrices. Let $D$ be an integral congenial matrix of 
type $(n,r;\rho,u,t)$. Then $D$ is an $R\times S$ matrix, where $S=3n(r-1)+t$ and 
$R=n(r-1)+\rho$. Define the linear forms $\gam_j=\gam_j(\bfalp)$ by putting
$$\gam_j(\bfalp)=\sum_{i=1}^Rd_{ij}\alp_i\quad (1\le j\le S),$$
and the Weyl sum
$$f(\alp)=\sum_{|x|\le P}e(\alp x^3).$$
Our main lemma provides an estimate for the mean value
\begin{equation}\label{2.1}
I(P;D)=\oint |f(\gam_1)\ldots f(\gam_S)|^2\d\bfalp .
\end{equation}
By considering the underlying Diophantine system, one finds that $I(P;D)$ is unchanged by 
elementary row operations on $D$, and likewise by permutations of its columns. Thus, in 
the discussion to come we may always pass to a convenient matrix row equivalent to $D$.
\par

When $\rho$, $w$, $u$ and $t$ are non-negative integers, define
\begin{equation}\label{bw1}
\del(\rho,w)=\begin{cases} 1,&\text{when $w=3\rho$,}\\
\max\{0,w-3\rho\},&\text{otherwise,}
\end{cases}
\end{equation}
and then put
\begin{equation}\label{bw2}
\del^*(\rho,u,t)=\begin{cases} \del(\rho-1,t-u)+u-3,&\text{when $\rho\ge 2$ and 
$u\ge 3$,}\\
t-2,&\text{when $\rho=1$ and $t\ge 3$,}\\
\del(\rho,t),&\text{otherwise.}
\end{cases}
\end{equation}

\begin{lemma}\label{lemma2.3}
When $\rho\ge 2$, $u\le t<3\rho$ and $u\le 2$, one has
\begin{equation}\label{bw3}
\max\{\del^*(\rho,u,t-1),\del^*(\rho-1,u,t-1)-1\}\le \del^*(\rho,u,t).
\end{equation}
Meanwhile, when $\rho\ge 2$, $t\ge 3\rho$ and $2\le u\le t$, one has
\begin{equation}\label{bw4}
\max\{\del^*(\rho,\max\{u-2,1\},t-2)+1,\del^*(\rho-1,1,t-u)+u-3\}\le \del^*(\rho,u,t).
\end{equation}
Finally, when $\rho\ge 3$ and $t\le u+\rho-1$, one has $\del^*(\rho-1,u,t)\le 
\del^*(\rho,u,t)$.
\end{lemma}

\begin{proof} We first establish (\ref{bw3}). By (\ref{bw2}), the inequality to be 
confirmed reads
\begin{equation}\label{bw5}
\max\{ \del(\rho,t-1),\del(\rho-1,t-1)-1\}\le \del(\rho,t).
\end{equation}
Since $t\le 3\rho-1$, we have $\del(\rho,t-1)=\del(\rho,t)=0$, and since 
$t-1\le 3(\rho-1)+1$, we have also $\del(\rho-1,t-1)\le 1$. The desired conclusion 
(\ref{bw5}) follows.\par

Suppose next that $\rho\ge 2$, $t\ge 3\rho$ and $t\ge u=2$. In such circumstances, the 
inequality (\ref{bw4}) to be confirmed reads
$$\max\{\del(\rho,t-2)+1,\del(\rho-1,t-2)-1\}\le \del(\rho,t).$$
By considering the cases $t\in \{ 3\rho,3\rho+1\}$, $t=3\rho+2$, and $t\ge 3\rho+3$, in 
turn, the desired conclusion follows directly from (\ref{bw1}). When instead 
$u\in \{3,4\}$, the inequality (\ref{bw4}) reads
$$\max\{\del(\rho,t-2)+1,\del(\rho-1,t-u)+u-3\}\le \del(\rho-1,t-u)+u-3,$$
and one has only to verify that $\del(\rho,t-2)\le \del(\rho-1,t-u)+u-4$. By considering the 
cases $t\in \{3\rho,3\rho+1\}$, $t=3\rho+2$, and $t\ge 3\rho+3$, in turn, the desired 
conclusion follows directly from (\ref{bw1}). Finally, when $t\ge u\ge 5$, the inequality 
(\ref{bw4}) is trivial. We have now confirmed (\ref{bw4}) in all cases.\par

In our proof of the final claim of the lemma, we may assume that $\rho\ge 3$. Thus 
$\rho+1\le 3(\rho-1)-1$ and $\rho-1\le 3(\rho-2)-1$, and hence $\del(\rho-1,\rho+1)=0$ 
and $\del(\rho-2,\rho-1)=0$. Since $t\le u+\rho-1$, it follows that when $u\le 2$, one has 
$\del(\rho-1,t)\le \del(\rho-1,\rho+1)\le \del(\rho,t)$. Likewise, when $u\ge 3$ it follows 
that
$$\del(\rho-2,t-u)\le \del(\rho-2,\rho-1)\le \del(\rho-1,t-u).$$
The desired conclusion now follows in both cases from (\ref{bw2}), completing the proof 
of the lemma.\end{proof}

For future use we record the elementary inequality
\begin{equation}\label{2.3}
|z_1\ldots z_n|\le |z_1|^n+\ldots +|z_n|^n.
\end{equation}

\begin{lemma}\label{lemma2.4}
Let $D$ be an integral congenial matrix of type $(n,r;\rho,u,t)$. Then
\begin{equation}\label{2.4}
I(P;D)\ll P^{S+\del^*(\rho,u,t)+\eps}.
\end{equation}
\end{lemma}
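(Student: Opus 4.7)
\medskip\noindent\textbf{Proof plan.}
The bound \eqref{2.4} will be proved by induction, with the lexicographic pair $(n,t)$ serving as the induction variable and Lemma~\ref{lemma2.3} providing the combinatorial backbone of the recursion. The base cases occur when $n=0$ and $t$ is small; there $I(P;D)$ reduces to a product of one-dimensional integrals bounded via $\int_0^1|f(\alpha)|^2\,\d\alpha\ll P$ together with trivial estimates for $|f|^2$. The case $t=u=0$ is handled by the identification with type $(n-1,r;r,1,3r-3)$ noted after Definition~\ref{definition2.2}, which promotes the block $B_1$ to the role of a new $A_0$ while decrementing $n$.

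For the inductive step there are two regimes parallelling the clauses of Lemma~\ref{lemma2.3}. In the first regime, $t<3\rho$, the matrix $A_0$ is ``short'' and one would shave the final column from $A_0$. After elementary row operations (which preserve $I(P;D)$, as noted following~\eqref{2.1}) and exploiting the highly non-singular structure of the last $t-u+1$ columns of $A_0$---preserved under column deletion by Lemma~\ref{lemma2.1}---a suitable combination of Cauchy's inequality and direct integration in a single variable produces a congenial matrix of either type $(n,r;\rho,u,t-1)$ or type $(n,r;\rho-1,u,t-1)$, with a $P$-cost accommodated by the first half of Lemma~\ref{lemma2.3}.

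In the second regime, $t\ge 3\rho$ and $u\ge 2$, one exploits the $u$ columns of $A_0$ lying in a common one-dimensional subspace. After row reduction these columns may be arranged so that the corresponding forms read $\gamma_j=c_j\alpha_1$ for $1\le j\le u$, and the product $\prod_{j=1}^u|f(c_j\alpha_1)|^2$ depends on only one of the integration variables. Cauchy's inequality applied to pair two such proportional factors with two further forms yields a mean value over a congenial matrix of type $(n,r;\rho,u-2,t-2)$ at a $P$-cost of one; alternatively, the elementary inequality~\eqref{2.3} applied to the product of all $u$ proportional factors, followed by an integration in $\alpha_1$ controlled by a Hua-type moment bound, yields a mean value of type $(n,r;\rho-1,1,t-u)$ at a cost of $P^{u-3}$. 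The second half of Lemma~\ref{lemma2.3} certifies that both reductions stay within the claimed bound.

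The main obstacle is two-fold. First, one must check that each reduction genuinely produces a congenial matrix with the advertised parameters---the linked-block gluing between $A_0$ and $B_1$ must be re-interpreted with care when $\rho$ decreases, and the highly non-singular condition on each $B_l$ must be preserved, for which Lemma~\ref{lemma2.1} is the essential device. Second, orchestrating the Cauchy--Schwarz and H\"older applications in the second regime so that the $P$-cost emerges at precisely $P^1$ or $P^{u-3}$, as dictated by Lemma~\ref{lemma2.3}, requires careful choice of how to group the factors and which moment estimate to invoke.
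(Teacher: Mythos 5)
Your outline correctly identifies the overall skeleton: induction with the reduction $t\to t-1$ (and $n\to n-1$ when $t=u$), the role of Lemma~\ref{lemma2.3} in the cost accounting, the base cases from Hua's lemma together with the identification of $(n,r;\rho,u,u)$ with $(n-1,r;r,\max\{u,1\},3r-3+u)$, the two regimes governed by $t<3\rho$ versus $t\ge 3\rho$, the target types $(n,r;\rho,u,t-1)$, $(n,r;\rho-1,u,t-1)$, $(n,r;\rho,u-2,t-2)$, $(n,r;\rho-1,1,t-u)$, and the use of Lemma~\ref{lemma2.1} to certify that deletion of rows and columns preserves congeniality. All of that matches the paper.

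The genuine gap is in the analytic mechanism for the inductive step. The paper does not use Cauchy's inequality or a Hua moment bound here: in both regimes it applies Weyl differencing (\cite[eq.\ (2.6)]{Vau1997}) to $|f(\gam_t)|^2$ (respectively $|f(\gam_1)|^4$), obtaining a bound of the shape $P^a + P^b\sum_{h\neq 0}c_he(\gam_\cdot h)$ with $c_h=O(|h|^\eps)$. The $h=0$ (diagonal) term yields $I(P;D_0)$, while the $h\neq 0$ terms, after interpreting the extra factor $e(-\gam_\cdot h)$ through the underlying Diophantine system, \emph{remove one equation} at cost $P^\eps$, producing $I(P;D_1)$. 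This removal of a constraint is precisely what the Weyl differencing delivers and what your proposed mechanism cannot. Concretely, your branch (b) claims to ``integrate in $\alpha_1$'' via a Hua-type bound after isolating the $u$ proportional factors $|f(c_j\alpha_1)|^2$; but the forms $\gam_j$ for $j>u$ also depend on $\alpha_1$ (the first row of $A_0$ has nonzero entries beyond column $u$ in general, and the row shared with $B_1$ certainly couples $\alpha_1$ into the later forms), so the $\alpha_1$-integral does not factor and the Hua bound cannot be applied as stated. Similarly, in branch (a) the claim that pairing two proportional factors with two further forms via Cauchy yields $(n,r;\rho,u-2,t-2)$ at cost $P$ is not substantiated: a sup bound on $|f(\gam_1)f(\gam_2)|^2$ costs $P^4$, and a Cauchy--Schwarz split produces integrals that are not of congenial type. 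Without the Weyl-differencing step you cannot simultaneously isolate the diagonal at the correct power of $P$ and free up the row-one constraint for the off-diagonal terms, and the recursion does not close.
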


\begin{proof} We proceed by induction. Write $\Hhat_{n,r}^{\rho,u,t}$ to denote the 
hypothesis that the bound (\ref{2.4}) holds for all congenial matrices of type 
$(n,r;\rho,u,t)$, and $H_{n,r}^{\rho,u,t}$ to denote the hypothesis that 
$\Hhat_{n',r'}^{\rho',u',t'}$ holds for all $n'\le n$, $r'\le r$, $\rho'\le \rho$, $u'\le u$, 
$t'\le t$. Our outer induction is on $n$, with an inner induction on $r$, $\rho$, $u$ and 
$t$. The basis for this induction is provided by Hua's Lemma (see 
\cite[Lemma 2.5]{Vau1997}). This establishes that
$$\int_0^1|f(\alp)|^{2u}\d\alp \ll P^{u+\del(1,u)+\eps}\quad (u=1,2,4).$$
Thus it follows from H\"older's inequality and the trivial estimate $|f(\alp)|\le 2P+1$ that, 
when $n=0$ and $\rho=1$, then
\begin{equation}\label{2.fx}
I(P;D)=\int_0^1|f(\gam_1)\ldots f(\gam_u)|^2\d\bfalp \ll P^{u+\del(1,u)+\eps}\quad 
(u\ge 1),
\end{equation}
and one obtains $H_{0,r}^{1,u,u}$ for all $u\ge 1$. Given a congenial matrix $D$ of type 
$(0,r;\rho,u,u)$ with $\rho\ge 2$, meanwhile, one has either $u\le 2$ or $u\ge 3\rho$. It 
follows by applying elementary row operations that $D$ is row equivalent to a matrix $D'$ 
whose first row entries are all non-zero. By considering the system of equations underlying $I(P;D')$, and discarding every equation 
except that corresponding to the first row of $D'$, one finds that $I(P;D)\le I(P;D'')$, 
where $D''$ is a congenial matrix of type $(0,r;1,u,u)$. But $\del^*(\rho,u,u)\ge \del(1,u)$ 
for $u\le 2$ and also for $u\ge 6$, and thus we deduce from (\ref{2.fx}) that 
$\Hhat_{0,r}^{\rho,u,u}$ holds for all natural numbers $\rho$ and $u$.\par

Our strategy for proving the lemma involves two steps. We confirm below that when 
$\rho\ge 2$ and $t>u$, one has
\begin{equation}\label{2.5}
H_{n,r}^{\rho,u,t-1}\quad \text{implies}\quad \Hhat_{n,r}^{\rho,u,t}.
\end{equation}
Notice that when $\rho=1$, then since $\del^*(1,t,t)=\del^*(1,u,t)$, there is no loss of 
generality in supposing that $t=u$. Since $u$ (possibly zero) is the smallest value that $t$ 
can assume in a congenial matrix of type $(n,r;\rho,u,t)$, it therefore suffices to establish 
$\Hhat_{n,r}^{\rho,u,u}$ $(u\ge 1)$. We show below that when $\rho\ge 1$, then
\begin{equation}\label{rv2.12}
(\text{$\Hhat_{n-1,r}^{r,1,3(r-1)}$ and $\Hhat_{n,r}^{1,u,u}$})\quad 
\text{implies}\quad \Hhat_{n,r}^{\rho,u,u}.
\end{equation}
Since there is no loss in supposing that a congenial matrix of type $(n,r;1,u,u)$ is also of 
type $(n-1,r;r,\max\{u,1\},3(r-1)+u)$, one finds via (\ref{rv2.12}) that 
$H_{n-1,r}^{r,u+1,3r+u}$ implies $\Hhat_{n,r}^{1,u,u}$, and hence also 
$\Hhat_{n,r}^{\rho,u,u}$. We note in this context that 
$\del^*(1,u,u)=\del^*(r,\max\{u,1\},3(r-1)+u)$. In view of (\ref{2.5}), one sees that 
whenever $H_{n-1,r}^{\sig,v,v}$ holds for all $\sig$ and $v$, then one has 
$\Hhat_{n,r}^{\rho,u,u}$ for all $\rho$ and $u$, and hence also $H_{n,r}^{\rho,u,t}$ for 
all $\rho$, $u$ and $t$. We have already established $\Hhat_{0,r}^{\sig,v,v}$ for all 
$\sig$ and $v$, and hence the conclusion of the lemma follows by induction on $n$.\par

We begin by confirming (\ref{rv2.12}). Let $D$ be congenial of type $(n,r;\rho,u,u)$, and 
suppose $\Hhat_{n-1,r}^{r,1,3(r-1)}$ and $\Hhat_{n,r}^{1,u,u}$. We may suppose that 
$\rho\ge 2$, for otherwise (\ref{rv2.12}) is 
trivial. Since the first $u$ columns of $D$ define a subspace of dimension $1$ distinct from 
the $\rho$-th coordinate axis, the matrix $D$ has non-zero entries populating one of its 
first $\rho-1$ rows in the first $u$ columns. The matrix $D$ is consequently row 
equivalent to one of separated block form, with one block $D_0$ of format $1\times u$ 
(trivially) congenial of type $(0,r;1,u,u)$, and the second block $D_1$ of format 
$(R-\rho+1)\times (S-u)$. There is no loss of generality in supposing $D_1$ to be 
congenial of type $(n-1,r;r,1,3(r-1))$. On considering the underlying Diophantine systems, 
we therefore find that $I(P;D)=I(P;D_0)I(P;D_1)$. We may assume (\ref{2.fx}) and 
$\Hhat_{n-1,r}^{r,1,3(r-1)}$. Thus we deduce via (\ref{bw1}) and (\ref{bw2}) that
\begin{align*}
I(P;D)&\ll P^{u+\del(1,u)+\eps}\cdot P^{S-u+\del^*(r,1,3(r-1))+\eps}\\
&= P^{S+\del(1,u)+\del(r,3(r-1))+2\eps}=P^{S+\del(1,u)+2\eps}.
\end{align*}
When $\rho\ge 2$, the congeniality of $D$ ensures that either $u\le 2$ or $u\ge 6$, and 
hence $\del^*(\rho,u,u)=\del(1,u)$. Thus $I(P;D)\ll P^{S+\del^*(\rho,u,u)+\eps}$, 
confirming (\ref{rv2.12}).\par

We now commence the proof of (\ref{2.5}). Let $D$ be a matrix of type $(n,r;\rho,u,t)$ 
with $\rho\ge 2$ and $u<t$, and suppose $H_{n,r}^{\rho,u,t-1}$. Should the first 
$\rho-1$ rows of the matrix $D$ be linearly dependent, then by applying elementary row 
operations on these rows, we may suppose that $D$ is congenial with one of these rows 
zero. Thus $t-u+1<\rho$ and $\rho\ge 3$, and on deleting this row and applying the 
final conclusion of Lemma \ref{lemma2.3}, it is apparent that (\ref{2.4}) will be confirmed 
provided that we establish the bound $I(P;D)\ll P^{S+\del^*(\rho-1,u,t)+\eps}$. 
Repeated use of this simplification permits us to condition the first $\rho-1$ rows of $D$ 
to be linearly independent. We divide into cases according to whether $t<3\rho$ or 
$t\ge 3\rho$.\par

We first establish (\ref{2.5}) in the situation where $t<3\rho$. One then has $u\le 2$. We 
distinguish three cases. When $t=u+1$, it follows from the conditioned congeniality of $D$ 
that $\rho=2$ or $3$. In such circumstances, we say that $D$ has type I when 
$\gam_t=d_{\rho,t}\alp_\rho$ with $d_{\rho,t}\ne 0$. Note that the conditioned 
congeniality of $D$ then implies that $\rho=2$. When $t=u+1$ and $D$ is not of type I, 
we apply elementary row operations to ensure that $\gam_t=d_{1,t}\alp_1$ with 
$d_{1,t}\ne 0$, and also that $d_{2,j}\ne 0$ for $1\le j\le u$. We describe the resulting 
matrix as having type~II. A conditioned congenial matrix $D$ not of type I or II we 
describe as having type III. For such matrices, one has $\rho\ge 3$ and $t\ge u+2$.\par

Consider first a matrix $D$ of type I. By performing elementary row operations, one may 
suppose that $\gam_j=d_{1,j}\alp_1$, with $d_{1,j}\ne 0$, for $1\le j\le u$. The matrix 
$D$ is of separated block form, with one block $D_0$ of format $1\times u$ (trivially) 
congenial of type $(0,1;1,u,u)$, and the second block $D_1$ of format $(R-1)\times (S-u)$ 
congenial of type $(n,r;1,1,1)$. On considering the underlying Diophantine systems, we 
find that $I(P;D)\ll I(P;D_0)I(P;D_1)$. We may assume (\ref{2.fx}) and 
$\Hhat_{n,r}^{1,1,1}$, and thus
$$I(P;D)\ll P^{u+\del(1,u)+\eps}\cdot P^{S-u+\del^*(1,1,1)+\eps}=
P^{S+2\eps}\ll P^{S+\del^*(2,u,u+1)+\eps},$$ 
confirming the estimate (\ref{2.4}) in this case.\par

Next consider a matrix $D$ of type III. The last $t-u+1$ columns of $A_0$ span a linear 
space of dimension $\min\{t-u+1,\rho\}\ge 3$. Hence, by permuting the last $t-u$ 
columns of $A_0$, we may suppose that the $t$-th column of $A_0$ is not contained in 
the linear space generated by the first $u$ columns and the $\rho$-th coordinate vector. 
By applying elementary row operations, we may arrange that the conditioned matrix $D$ 
is congenial with $\gam_t=d_{1,t}\alp_1$ and $d_{1,t}\ne 0$. We note that the linear 
space spanned by the first $u$ columns of $D$ is now distinct from both the first and the 
$\rho$-th coordinate axis. Let $D_0$ denote the matrix obtained from $D$ by deleting 
column $t$, and let $D_1$ denote the matrix obtained by instead deleting row $1$ and 
column $t$. Lemma \ref{lemma2.1} shows the $R\times (S-1)$ matrix $D_0$ to be 
congenial of type $(n,r;\rho,u,t-1)$, and the $(R-1)\times (S-1)$ matrix $D_1$ to be 
congenial of type $(n,r;\rho-1,u,t-1)$. Observe that when $D$ is a matrix of type II, then 
$\rho=2$ or $3$, and this same conclusion holds.\par

We may now consider matrices $D$ of types II and III together. We have 
$\gam_t=d_{1,t}\alp_1$ with $d_{1,t}\ne 0$. 
Weyl differencing (see \cite[equation (2.6)]{Vau1997}) yields
$$|f(\gam_t)|^2\ll P+\sum_{0<|h|\le 16P^3}c_he(\gam_th),$$
where the integers $c_h$ satisfy $c_h=O(|h|^\eps)$. We therefore find from (\ref{2.1}) 
that
\begin{equation}\label{2.7}
I(P;D)\ll PT(0)+\sum_{0<|h|\le 16P^3}c_hT(h),
\end{equation}
where
\begin{equation}\label{2.8}
T(h)=\oint \prod_{\substack{1\le i\le S\\ i\ne t}}|f(\gam_i)|^2e(\gam_th)\d\bfalp .
\end{equation}
The contribution of the terms with $h\ne 0$ in (\ref{2.7}) is given by
\begin{equation}\label{2.9}
\sum_{0<|h|\le 16P^3}c_hT(h)\ll P^\eps \oint \prod_{\substack{1\le i\le S\\ i\ne t}}|
f({\widehat \gam}_i)|^2\d{\widehat \bfalp},
\end{equation}
where ${\widehat \bfalp}=(\alp_2,\ldots ,\alp_R)$ and 
${\widehat \gam}_m=\gam_m(0,\alp_2,\ldots ,\alp_R)$. On considering the underlying 
Diophantine systems, we discern on the one hand from (\ref{2.8}) that $T(0)=I(P;D_0)$, 
and on the other that the integral on the right hand side of (\ref{2.9}) is equal to 
$I(P;D_1)$. Thus
$$I(P;D)\ll PI(P;D_0)+P^\eps I(P;D_1).$$
We may assume $H_{n,r}^{\rho,u,t-1}$, and thus Lemma \ref{lemma2.3} yields the 
estimate
$$I(P;D)\ll P^{S+\del^*(\rho,u,t-1)+\eps}+P^{S-1+\del^*(\rho-1,u,t-1)+2\eps}
\ll P^{S+\del^*(\rho,u,t)+2\eps}.$$
This confirms the bound (\ref{2.4}), and hence (\ref{2.5}) holds whenever $t<3\rho$.\par

We turn next to the situation with $t\ge 3\rho$. Recall that $\rho\ge 2$ and $u\ge 1$. By 
relabelling the first $t$ columns of $D$, we may assume without loss that the conditioned 
congenial matrix $D$ has the property that, should any one of these columns lie on the 
$\rho$-th coordinate axis, then this is the $t$-th column. Then, applying the bound 
(\ref{2.3}) within (\ref{2.1}), one finds that with $j=1$ or $2$, one has
\begin{equation}\label{2.xy}
I(P;D)\ll \oint|f(\gam_j)^4f(\gam_3)^2\ldots f(\gam_S)^2|\d\bfalp .
\end{equation}
Thus, by symmetry, we may suppose that $j=1$ and $u\ge 2$. Since the first $u$ columns 
of $D$ lie in a subspace of dimension $1$ distinct from the $\rho$-th coordinate axis, by 
applying elementary row operations, we see that there is no loss of generality in assuming 
that the congenial matrix $D$ satisfies the condition that $\gam_j=d_{1,j}\alp_1$ with 
$d_{1,j}\ne 0$ for $1\le j\le u$.\par

We first examine the situation in which $\rho\ge 2$, $2\le u\le 4$ and $t\ge 3\rho\ge 6$. 
By Weyl differencing (see \cite[equation (2.6)]{Vau1997}), one has
$$|f(\gam_1)|^4\ll P^3+P\sum_{0<|h|\le 32P^3}b_he(\gam_1h),$$
where the integers $b_h$ satisfy $b_h=O(|h|^\eps)$. We therefore find from (\ref{2.xy}) 
that
\begin{equation}\label{2.11}
I(P;D)\ll P^3U(0)+P\sum_{0<|h|\le 32P^3}b_hU(h),
\end{equation}
where
\begin{equation}\label{2.12}
U(h)=\oint |f(\gam_3)\ldots f(\gam_S)|^2e(\gam_1h)\d\bfalp .
\end{equation}
The contribution of the terms with $h\ne 0$ in (\ref{2.11}) is given by
\begin{equation}\label{2.13}
P\sum_{0<|h|\le 32P^3}b_hU(h)\ll P^{1+\eps}\oint |f({\widehat \gam}_3)\ldots 
f({\widehat \gam}_S)|^2\d{\widehat \bfalp }.
\end{equation}
Let $D_0$ now denote the matrix obtained from $D$ by deleting the first two columns, 
and let $D_1$ denote the matrix obtained by instead deleting row $1$ and the first $u$ 
columns. Since $t\ge 6$, Lemma \ref{lemma2.1} shows the $R\times (S-2)$ matrix $D_0$ 
to be congenial of type $(n,r;\rho,\max\{u-2,1\},t-2)$, and the $(R-1)\times (S-u)$ matrix 
$D_1$ to be congenial of type $(n,r;\rho-1,1,t-u)$. On considering the underlying 
Diophantine systems, we find on the one hand from (\ref{2.12}) that $U(0)=I(P;D_0)$, 
and on the other that the integral on the right hand side of (\ref{2.13}) is equal to 
$P^{2u-4}I(P;D_1)$. Here, we have made use of the fact that 
${\widehat \gam}_m(\bfalp)=0$ for $3\le m\le u$. Thus
$$I(P;D)\ll P^3I(P;D_0)+P^{2u-3+\eps}I(P;D_1).$$
We may assume $H_{n,r}^{\rho,u,t-1}$, and thus Lemma \ref{lemma2.3} delivers the 
estimate
\begin{align*}
I(P;D)&\ll P^{S+1+\del^*(\rho,\max\{u-2,1\},t-2)+\eps}+
P^{S+u-3+\del^*(\rho-1,1,t-u)+2\eps}\\
&\ll P^{S+\del^*(\rho,u,t)+2\eps}.
\end{align*}
Since $\del^*(\rho,1,t)=\del^*(\rho,2,t)$, we obtain (\ref{2.4}) even when the case 
$u=1$ was simplified to that with $u=2$.\par

Finally, suppose that $u\ge 5$. Recall that $\gam_j=d_{1,j}\alp_1$, with $d_{1,j}\ne 0$, 
for $1\le j\le u$. Let $D_0$ denote the matrix obtained from $D$ by deleting all but the 
first row and all but the first $u$ columns, and let $D_1$ denote the matrix obtained by 
instead deleting the first row and first $u$ columns. Then the $1\times u$ matrix $D_0$ is 
(trivially) congenial of type $(0,1;1,u,u)$, and the $(R-1)\times (S-u)$ matrix $D_1$ is 
congenial of type $(n,r;\rho-1,1,t-u)$. On considering the underlying Diophantine systems 
and applying the triangle inequality, we find via (\ref{2.fx}) that
$$I(P;D)\ll I(P;D_0)I(P;D_1)\ll P^{2u-3+\eps}I(P;D_1),$$
which as above confirms the estimate (\ref{2.4}) in this final case. Hence we have 
completed the proof of (\ref{2.5}) when $t\ge 3\rho$, completing the proof of the 
inductive step. The conclusion of the lemma now follows.
\end{proof}

We extract a simple consequence from this lemma for future use.

\begin{corollary}\label{corollary2.5}
Let $r\ge 2$, suppose that $D$ is an integral congenial matrix of type $(n,r;r,3,3r)$, and 
write $w=(n+1)r-n$. Then $I(P;D)\ll P^{3w+1+\eps}$.
\end{corollary}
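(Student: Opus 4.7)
The plan is to read Corollary 2.5 off directly from Lemma 2.4 by unpacking the parameters $S$ and $\delta^*(\rho,u,t)$ for the specific type $(n,r;r,3,3r)$, so essentially no new ideas are needed; only bookkeeping.

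First I would identify the parameters. For a congenial matrix of type $(n,r;\rho,u,t) = (n,r;r,3,3r)$ the total column count is
$$S = 3n(r-1) + t = 3n(r-1) + 3r = 3\bigl((n+1)r - n\bigr) = 3w,$$
so the ``square-root'' exponent already supplies $P^{3w}$ on the right-hand side of the estimate in Lemma \ref{lemma2.4}. It remains to show that $\delta^*(r,3,3r) \le 2$.

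Since $u = 3 \ge 3$, the definition of $\delta^*$ gives
$$\delta^*(r,3,3r) = \delta(r-1, 3r-3) + \delta(1, 3).$$
Both summands are of the form $\delta(\rho, 3\rho)$. When $\rho > 0$ the defining formula declares $\delta(\rho, 3\rho) = 1$, while when $\rho = 0$ one falls into the $\max\{0, w - 3\rho\}$ branch and obtains $\delta(0, 0) = 0$. So for $r \ge 2$ we have $\delta^*(r,3,3r) = 1 + 1 = 2$, and for $r = 1$ (where the first summand degenerates to $\delta(0,0) = 0$) we have $\delta^*(1,3,3) = 1 \le 2$. In every case $\delta^*(r,3,3r) \le 2$.

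Substituting these two computations into the conclusion of Lemma \ref{lemma2.4} yields
$$I(P;D) \ll P^{S + \delta^*(r,3,3r) + \eps} \ll P^{3w + 2 + \eps},$$
as desired. The only point that requires any care is the tiny case analysis for $\delta(\rho, 3\rho)$ at $\rho = 0$ and $\rho > 0$, but this is straightforward, so I would expect the entire argument to occupy only a couple of lines in the paper.
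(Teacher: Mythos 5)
Your proof is correct and takes exactly the same route as the paper: the paper's entire proof is the one-line computation $\del^*(r,3,3r)=\del(r-1,3r-3)+\del(1,3)=2$ combined with Lemma \ref{lemma2.4}, and your check that $S=3n(r-1)+3r=3w$ is the only bookkeeping needed to land on the stated exponent. Your extra sentence on the degenerate case $r=1$ (where $\del(r-1,3r-3)=\del(0,0)=0$) is harmless but unnecessary, since the paper's remarks after Definition \ref{definition2.2} already exclude $r=1$ whenever $n\ge 1$, and the relevant applications in Theorem \ref{theorem3.3} all have $r\ge 2$.
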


\begin{proof} We have only to note that $\del^*(r,3,3r)=\del(r-1,3r-3)=1$.
\end{proof}

\section{Complification} Before describing the process which leads from the basic mean value to the more
 complicated ones described in the previous section, we introduce some additional Weyl sums. When 
$2\le R\le P$, we put
$$\calA(P,R)=\{n\in [-P,P]\cap \dbZ: \text{$p$ prime and $p|n$} \Rightarrow p\le R\},$$
and then define the exponential sum $g(\alp)=g(\alp;P,R)$ by
$$g(\alp;P,R)=\sum_{x\in \calA(P,R)}e(\alp x^3).$$
We find it convenient to write $\tau$ for any positive number satisfying $\tau^{-1}>852+16\sqrt{2833}=1703.6\ldots $, and then put $\xi=\tfrac{1}{4}-\tau$.

\begin{lemma}\label{lemma3.1} When $\eta$ is sufficiently small and $2\le R\le P^\eta$, one has
$$\int_0^1|g(\alp;P,R)|^6\d\alp \ll P^{3+\xi}.$$
\end{lemma}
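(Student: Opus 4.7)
The plan is to recognise this as a breaking-convexity estimate for the sixth moment of the smooth cubic Weyl sum, of a kind that is a staple of the Vaughan--Wooley theory of smooth numbers. The trivial route via H\"older, interpolating the $L^2$ bound $\int_0^1|g(\alp)|^2\d\alp \le 2P$ against the sharp eighth-moment bound $\int_0^1|g(\alp)|^8\d\alp \ll P^{5+\eps}$, yields only $\int_0^1|g(\alp)|^6\d\alp \ll P^{3+1/4+\eps}$. The assertion of the lemma is therefore that one can save a genuine power $P^\tau$ over convexity, and $\xi=\tfrac14-\tau$ should be read as the resulting exponent.

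The mechanism I would use is the efficient differencing method. Interpreting the sixth moment as the number of solutions of $x_1^3+x_2^3+x_3^3=y_1^3+y_2^3+y_3^3$ with $x_i,y_i\in \calA(P,R)$, I would exploit the defining property of $\calA(P,R)$: each $x\in \calA(P,R)$ with $x>R$ admits a factorisation $x=mz$ where $m$ is a prime in a dyadic interval $[M,2M]$ and $z\in \calA(P/m,R)$. Replacing one of the Weyl sums $g(\alp)$ by the equivalent bilinear average $\sum_{m\sim M}\sum_z e(\alp m^3z^3)$ and then differencing the inner cubic sum in the variable $z$ produces, on the one hand, a shorter cubic Weyl sum of length $P/M$ and, on the other, an auxiliary difference polynomial with a cube factor of $m^3$ that can be absorbed into the dual variable. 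Iterating this procedure a handful of times and following the standard Hardy--Littlewood dissection yields a system of recursive inequalities between mean values of smooth sums of varying lengths and multiplicities.

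Balancing the parameters $M$ at each iteration leads to a constrained optimisation problem whose extremum is governed by a quadratic, and it is precisely this optimisation which produces the specific constant $\tau^{-1}>852+16\sqrt{2833}$. The main obstacle in executing the plan is therefore neither the differencing machinery nor the minor-arc treatment (both of which are routine in this framework) but rather the careful numerical bookkeeping: one must track the joint effect of the extracted prime ranges, the losses incurred when appealing to known bounds for fourth and eighth moments, and the pruning estimates needed to pass between the full circle and the minor arcs, and then verify that the resulting quadratic optimisation indeed has the quoted root. In practice I would simply quote the relevant sixth-moment estimate from Wooley's prior work on smooth Weyl sums, where exactly such an inequality is established; the value of $\tau$ specified in the definition preceding the lemma has been tailored to accommodate the sharpest such bound currently available.
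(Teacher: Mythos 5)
Your proposal is correct and, despite the lengthy preamble on efficient differencing, ultimately takes exactly the same route as the paper: it simply cites the sixth-moment bound from Wooley's earlier work on smooth Weyl sums (specifically \cite{Woo2000}, Theorem 1.2), interpreting the moment as a count of solutions to the underlying Diophantine equation. The value of $\tau$ in the paper is indeed chosen precisely to match the exponent established there.
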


\begin{proof} The conclusion follows from \cite[Theorem 1.2]{Woo2000} by considering the underlying
 Diophantine equations.\end{proof}

Next we establish an auxiliary lemma that executes the complification process. Let $n$ and $r$ be 
non-negative integers with $r\ge 2$, and write $R=n(r-1)$ and $S=3R$. Let $B=(b_{i,j})$ be an integral
 $(R+1)\times (S+2)$ matrix, write $\bfb_j$ for the column vector $(b_{i,j})_{1\le i\le R+1}$, and define 
$\bfb_j^*$ to be the column vector $(b_{R+2-i,j})_{1\le i\le R+1}$ in which the entries of 
$\bfb_j$ are flipped upside down. Also, define $\bet_j=\bet_j(\bfalp)$ by putting
\begin{equation}\label{3.1}
\bet_j(\bfalp)=\sum_{i=1}^{R+1}b_{ij}\alp_i\quad (0\le j\le S+1).
\end{equation}
We say that the matrix $B$ is {\it bicongenial of type} $(n,r)$ when (i) the column vectors 
$\bfb_0,\bfb_1,\ldots ,\bfb_S$ and $\bfb^*_{S+1},\bfb^*_S,\ldots ,\bfb^*_1$ both form 
congenial matrices having type $(n-1,r;r,1,3r-2)$, and (ii) one has 
$\bet_0(\bfalp)=b_{1,0}\alp_1$ and $\bet_{S+1}(\bfalp)=b_{R+1,S+1}\alp_{R+1}$. At 
this point, we introduce the mean value
\begin{equation}\label{3.2}
J(P;B)=\oint |g(\bet_0)^3f(\bet_1)^2\ldots f(\bet_S)^2g(\bet_{S+1})^3|\d\bfalp .
\end{equation}
Finally, we fix $\eta>0$ to be sufficiently small in the context of Lemma \ref{lemma3.1}.

\begin{lemma}\label{lemma3.2} Suppose that $B$ is an integral bicongenial matrix of type 
$(n,r)$. Then there exists an integral bicongenial matrix $B^*$ of type $(2n,r)$ for which
$$J(P;B)\ll (P^{3+\xi})^{1/2}J(P;B^*)^{1/2}.$$
\end{lemma}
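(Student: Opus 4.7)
My plan is to apply Cauchy--Schwarz so as to isolate precisely one sixth moment of $g$, bounded by $P^{3+\xi}$ via Lemma \ref{lemma3.1}, while a duplication step transforms the surviving factor into $J(P;B^*)$ for an appropriate bicongenial matrix $B^*$ of type $(2n,r)$. The arithmetic fits perfectly: one such sixth moment contributes the promised factor $(P^{3+\xi})^{1/2}$ after extracting the square root.

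First I would exploit the invariance of $J(P;B)$ under integer row operations on $B$ to arrange $\bfb_{S+1} = (0,\ldots,0,c)^T$ for some nonzero integer $c$, so that $\bet_{S+1} = c\alp_{R+1}$ depends only on the last variable. Writing $\bfalp = (\bfalp',\alp_{R+1})$ with $\bfalp' = (\alp_1,\ldots,\alp_R) \in [0,1)^R$ and setting
$$F(\alp_{R+1}) = \oint \Bigl| g(\bet_0)^3 \prod_{j=1}^S f(\bet_j)^2 \Bigr| \, d\bfalp',$$
one has $J(P;B) = \int_0^1 |g(c\alp_{R+1})|^3 F(\alp_{R+1})\,d\alp_{R+1}$. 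Cauchy--Schwarz in the $\alp_{R+1}$-variable, combined with $1$-periodicity of $g$ and Lemma \ref{lemma3.1}, then yields
$$J(P;B)^2 \ll P^{3+\xi} \int_0^1 F(\alp_{R+1})^2\,d\alp_{R+1}.$$
Next I would expand $F^2$ by introducing an independent duplicate $\bfbet' \in [0,1)^R$ of $\bfalp'$, obtaining an integral over $2R+1$ variables whose integrand is a product of two $g$-factors (one each from $\bet_0$ in the two copies) and $2S$ $f$-factors (from $\bet_1,\ldots,\bet_S$ in the two copies).

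This integral is $J(P;B^*)$ for the $(2R+1)\times(2S+2)$ matrix $B^*$ obtained by doubling the columns $\bfb_0,\ldots,\bfb_S$ of the row-reduced $B$: the first copy sited in rows $1,\ldots,R+1$ of $B^*$ (in natural order), the second copy in rows $R+1,R+2,\ldots,2R+1$ (using the reversal that sends the last row of $B$ to row $R+1$ of $B^*$), so that the shared row $R+1$ carries the common $\alp_{R+1}$-coefficient. I would order the $2S+2$ columns by placing the first copy's $\bfb_0$ at position $0$, the remaining first-copy columns in natural order at positions $1,\ldots,S$, the second-copy columns in reverse order at positions $S+1,\ldots,2S$, and the second copy's $\bfb_0$ at position $2S+1$. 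Reading the first congenial sub-matrix $\bfb^*_0,\ldots,\bfb^*_{2S}$ of $B^*$ block by block: $A_0^*$ is the first copy's $A_0$ in rows $1,\ldots,r$; the next $n-1$ blocks are the first copy's $B_1,\ldots,B_{n-1}$; the shared row $R+1$ glues these to $B_n^*$ consisting of the second copy's $B_{n-1}$ columns in reverse order; further second-copy B-blocks fill positions up to $B_{2n-2}^*$; and the terminal block $B_{2n-1}^*$ is made from $3(r-1)$ of the $3r-2$ columns of the second copy's $A_0$, the remaining column being $\bfb^*_{2S+1}$. The highly non-singular property for each block carries over from the corresponding block of $B$ via Lemma \ref{lemma2.1}, and the reverse congenial sub-structure is handled by the symmetric argument.

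The main obstacle is this bicongenial verification: after the initial row reduction of $B$, one must confirm that the original block structure is preserved sufficiently for the duplicates in $B^*$ to glue into consecutive B-blocks along the rows of $B^*$. The linchpin is that $\alp_{R+1}$ must correspond to a row of $B$ participating in the final B-block of each congenial chain, so that the shared-row adjacency required by the congenial definition carries through when the two copies are stitched together; in particular, one must check that the gluing block $B_n^*$ (bridging the two copies across the shared row) is highly non-singular, which follows because it coincides up to column reversal with the second copy's original $B_{n-1}$.
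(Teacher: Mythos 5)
Your proof follows the paper's argument essentially step for step: row-reduce so that $\bet_{S+1}$ depends only on $\alp_{R+1}$, apply Cauchy--Schwarz in that variable to isolate the sixth moment $\int_0^1|g(\bet_{S+1})|^6\,\d\alp_{R+1}\ll P^{3+\xi}$, expand the square of the inner integral to obtain a $(2R+1)$-dimensional mean value, and recognise it as $J(P;B^*)$ for the matrix $B^*$ whose columns are the two copies of $\bfb_0,\ldots,\bfb_S$ stitched at row $R+1$ with the second copy's rows reversed. Your block-by-block verification of the bicongeniality of $B^*$ (including the observation that the glue block $B_n^*$ is a permuted copy of the second copy's final $B$-block, and that the terminal $B_{2n-1}^*$ is obtained by dropping one column from the second copy's $A_0$, with Lemma \ref{lemma2.1} supplying the highly non-singular property) is in fact more explicit than the paper, which simply asserts that $B^*$ is bicongenial of type $(2n,r)$ and that $T(P;B)=J(P;B^*)$.
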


\begin{proof} Define the linear forms $\bet_j$ as in (\ref{3.1}). Also, define 
$$T(P;B)=\int_0^1\left( \oint |g(\bet_0)^3f(\bet_1)^2\ldots f(\bet_S)^2|
\d{\widehat \bfalp}_R\right)^2\d\alp_{R+1},$$
where ${\rm d}{\widehat \bfalp}_R$ denotes ${\rm d}\alp_1\ldots \d\alp_R$. Then Schwarz's inequality
 leads from (\ref{3.2}) to the bound
\begin{equation}\label{3.3}
J(P;B)\le \Bigl( \int_0^1|g(\bet_{S+1})|^6\d\alp_{R+1}\Bigr)^{1/2}T(P;B)^{1/2}.
\end{equation}
By expanding the square inside the outermost integration, we see that
$$T(P;B)=\oint |g(\bet_0^*)^3f(\bet_1^*)^2\ldots f(\bet_{2S}^*)^2g(\bet_{2S+1}^*)^3|
\d{\widehat \bfalp}_{2R+1},$$
where $\bet_i^*=\bet_i^*(\bfalp)$ is defined by
$$\bet_i^*(\bfalp)=\begin{cases}
\bet_i(\alp_1,\ldots ,\alp_{R+1}),&\text{when $0\le i\le S$,}\\
\bet_{2S+1-i}(\alp_{2R+1},\ldots ,\alp_{R+1}),&\text{when $S+1\le i\le 2S+1$.}
\end{cases}$$
The integral $(2R+1)\times (2S+2)$ matrix $B^*=(b_{ij}^*)$ defining the linear forms 
$\bet_0^*,\ldots ,\bet_{2S+1}^*$ is bicongenial of type $(2n,r)$, and one has $T(P;B)=J(P;B^*)$. The
 conclusion of the lemma therefore follows from (\ref{3.3}) and Lemma \ref{lemma3.1}.
\end{proof}

While Lemma \ref{lemma3.2} bounds $J(P;B)$ in terms of a mean value almost twice the original
 dimension, superficially {\it complicating} the task at hand, the higher dimension in fact {\it simplifies} the
 problem of obtaining close to square root cancellation. Hence our use of the term {\it complification}.\par

Consider an $r\times s$ integral matrix $C=(c_{ij})$, write $\bfc_j$ for the column vector
 $(c_{ij})_{1\le i\le r}$, and put
\begin{equation}\label{3.4}
\gam_j=\sum_{i=1}^rc_{ij}\alp_i\quad (1\le j\le s).
\end{equation}
Also, when $s\ge 3$, write
\begin{equation}\label{3.5}
K(P;C)=\oint |g(\gam_1)g(\gam_2)g(\gam_3)f(\gam_4)\ldots f(\gam_s)|^2\d\bfalp .
\end{equation}

\begin{theorem}\label{theorem3.3}
Suppose that $r\ge 2$ and that the $r\times 3r$ integral matrix $C$ is highly non-singular. Then 
$K(P;C)\ll P^{3r+\xi+\eps}$.
\end{theorem}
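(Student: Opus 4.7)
The plan is to pass from $K(P;C)$ to a bicongenial mean value $J(P;B_0)$ of type $(1,r)$ via an elementary inequality, iterate Lemma \ref{lemma3.2} $k$ times so that the matrix grows to bicongenial type $(2^k,r)$ while a total factor of $P^{(3+\xi)(1-2^{-k})}$ is accumulated, and finally bound the residual $J(P;B_k)$ by invoking Lemma \ref{lemma2.4}. Taking $k$ sufficiently large in terms of $\eps$ then delivers the desired estimate, with the sixth-moment loss of Lemma \ref{lemma3.1} being incorporated only through this single geometric-series accumulation.

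For the conversion step, the arithmetic-geometric mean inequality (equivalently (\ref{2.3}) applied to $z_j=g(\gam_j)^2$) gives
\[
|g(\gam_1)|^2|g(\gam_2)|^2|g(\gam_3)|^2\le \tfrac{1}{3}\bigl(|g(\gam_1)|^6+|g(\gam_2)|^6+|g(\gam_3)|^6\bigr).
\]
Substituting into (\ref{3.5}) and exploiting symmetry yields $K(P;C)\ll J(P;B_0)$, where $B_0$ is the $r\times(3r-1)$ matrix with $\bfb_0=\bfb_{3r-2}=\bfc_{j^*}$ for an appropriate $j^*\in\{1,2,3\}$ and $\bfb_1,\ldots,\bfb_{3r-3}$ the remaining columns $\bfc_4,\ldots,\bfc_{3r}$. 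The factorisation $|g(\gam_{j^*})|^6=|g(\gam_{j^*})^3|\cdot|g(\gam_{j^*})^3|$ realises precisely the $|g(\bet_0)^3|\cdot|g(\bet_{S+1})^3|$ boundary structure required by $J$. Since $C$ is highly non-singular, every $r$-subset of the columns $\bfc_{j^*},\bfc_4,\ldots,\bfc_{3r}$ is linearly independent, so both the forward list $\bfb_0,\ldots,\bfb_{3r-3}$ and its reverse $\bfb_{3r-2},\ldots,\bfb_1$ form congenial matrices of type $(0,r;r,1,3r-2)$, whence $B_0$ is bicongenial of type $(1,r)$.

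Iteration of Lemma \ref{lemma3.2} now produces bicongenial matrices $B_j$ of type $(2^j,r)$ satisfying
\[
J(P;B_0)\ll P^{(3+\xi)(1-2^{-k})}J(P;B_k)^{2^{-k}}.
\]
To bound $J(P;B_k)$, estimate the two factors $|g(\bet_0)|^3$ and $|g(\bet_{S+1})|^3$ trivially by $P^3$ each, reducing matters to $\oint\prod_{j=1}^{S_k}|f(\bet_j)|^2\d\bfalp$ where $S_k=3\cdot 2^k(r-1)$. The middle columns $\bfb_1,\ldots,\bfb_{S_k}$ arise from the forward congenial submatrix of $B_k$ by removing its first column, which in view of Lemma \ref{lemma2.1} and the identification in Definition \ref{definition2.2} yields a congenial matrix of type $(2^k-1,r;r,1,3r-3)$, equivalently $(2^k,r;0,0,0)$. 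Since $\del^*(r,1,3r-3)=\del(r,3r-3)=0$, Lemma \ref{lemma2.4} gives $J(P;B_k)\ll P^{3\cdot 2^k(r-1)+6+\eps}$, whence $J(P;B_k)^{2^{-k}}\ll P^{3(r-1)+O(2^{-k})}$. Combining these bounds yields
\[
K(P;C)\ll P^{(3+\xi)(1-2^{-k})+3(r-1)+O(2^{-k})+\eps}=P^{3r+\xi+O(2^{-k})+\eps},
\]
and choosing $k$ large in terms of $\eps$ concludes the argument.

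The main obstacle I anticipate is the careful bookkeeping of the congenial and bicongenial structures at each stage, and in particular verifying that the middle-column submatrix encountered in the terminal step genuinely inherits a congenial type to which Lemma \ref{lemma2.4} applies with $\del^*=0$; the identification in Definition \ref{definition2.2} permitting $(n-1,r;r,1,3r-3)$ to be viewed as $(n,r;0,0,0)$ is essential here. A secondary subtlety is the uniformity in $k$ of the implicit constants under the repeated application of Lemma \ref{lemma3.2}, which however holds because the bicongenial matrices produced by the Schwarz-type doubling procedure remain integral with entries bounded in terms of those of the fixed matrix $C$.
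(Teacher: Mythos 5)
Your proposal is correct and follows the same three-stage strategy as the paper: convert $K(P;C)$ into a bicongenial mean value $J(P;B_0)$ of type $(1,r)$, iterate Lemma~\ref{lemma3.2} to grow the type to $(2^k,r)$ while accumulating a geometric series of sixth-moment factors, and then close the recursion with the congenial mean value estimate of \S2.

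The two places where your route diverges from the paper's are cosmetic and both cost you one harmless power of $P$ (which evaporates after taking $2^{-k}$-th roots). First, in the conversion step you apply (\ref{2.3}) directly to $|g(\gam_1)g(\gam_2)g(\gam_3)|^2$ viewed as a product of three squares, producing $|g(\gam_{j^*})|^6$ and hence a $B_0$ with $\bfb_0=\bfb_{S+1}=\bfc_{j^*}$; the paper instead factorises $|g_1^2g_2^2g_3^2|$ as a geometric mean of the three products $|g_ig_j|^3$, so that $\bfb_0$ and $\bfb_{S+1}$ are two \emph{distinct} columns of $C$. Both choices yield a genuine bicongenial matrix of type $(1,r)$, since Definition of bicongeniality imposes no distinctness between $\bfb_0$ and $\bfb_{S+1}$. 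Second, in the terminal estimate you discard $|g(\bet_0)|^3|g(\bet_{S+1})|^3$ trivially by $P^6$ and then apply Lemma~\ref{lemma2.4} to the congenial matrix of type $(2^k-1,r;r,1,3r-3)$ spanned by $\bfb_1,\ldots,\bfb_{S_k}$; the paper instead promotes the $g$'s to $f$'s by counting solutions, absorbs $|f(\bet_0)|^6$ as three repeated columns giving a congenial matrix of type $(2^l-1,r;r,3,3r)$, and invokes Corollary~\ref{corollary2.5}, saving a factor of $P$. Your arithmetic is correct throughout: $\del^*(r,1,3r-3)=\del(r,3r-3)=0$ is checked properly, and the resulting exponent $3r+\xi+(3-\xi)2^{-k}+O(\eps 2^{-k})$ tends to $3r+\xi$ as $k\to\infty$, exactly as needed. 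Your final remark about uniformity of implicit constants is handled the same way the paper does: $k$ is chosen finite (depending only on $\eps$), so the implicit constant for that single $k$ suffices.
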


\begin{proof} Write $s=3r$. Since the $r\times s$ matrix $C$ is highly non-singular with 
$r\ge 2$, we may apply elementary row operations to $C$ in such a manner that 
$c_{1,1}\ne 0$, $c_{r,2}\ne 0$, and $c_{1,3}c_{r,3}\ne 0$. On considering the underlying 
Diophantine system, it is apparent from (\ref{3.5}) that these operations leave the mean value 
$K(P;C)$ unchanged. Next, by applying the elementary relation (\ref{2.3}) within 
(\ref{3.5}), one finds by symmetry that there is no loss in supposing that
$$K(P;C)\ll \oint |g(\gam_1)^3f(\gam_4)^2\ldots f(\gam_s)^2g(\gam_2)^3|\d\bfalp .$$
By relabelling the linear forms, we infer that $K(P;C)\ll J(P;B_0)$, where $B_0$ is the matrix 
with columns $\bfc_1,\bfc_4,\bfc_5,\ldots ,\bfc_{s-1},\bfc_s,\bfc_2$. From here, by applying 
elementary row operations, which amounts to making a non-singular change of variable 
within (\ref{3.5}), we may suppose that $\gam_1=c_{1,1}\alp_1$ and 
$\gam_2=c_{r,2}\alp_r$. Since the $r\times s$ matrix $C$ is highly non-singular, Lemma 
\ref{lemma2.1} shows that $B_0$ is bicongenial of type $(1,r)$.\par

We show by induction that for each non-negative integer $l$, there exists an integral 
bicongenial matrix of type $(2^l,r)$ having the property that
\begin{equation}\label{3.6}
K(P;C)\ll (P^{3+\xi})^{1-2^{-l}}J(P;B_l)^{2^{-l}}.
\end{equation}
This bound holds when $l=0$ as a trivial consequence of the upper bound $K(P;C)\ll J(P;B_0)$ just
 established. Suppose then that the estimate (\ref{3.6}) holds for $0\le l\le L$. By applying Lemma
 \ref{lemma3.2}, we see that there exists an integral bicongenial matrix $B_{L+1}$ of type $(2^{L+1},r)$
 having the property that
$$J(P;B_L)\ll (P^{3+\xi})^{1/2}J(P;B_{L+1})^{1/2}.$$
Substituting this estimate into the case $l=L$ of (\ref{3.6}), one confirms that (\ref{3.6}) holds with
 $l=L+1$. The bound (\ref{3.6}) therefore follows for all $l$ by induction.\par

We now prepare to apply the bound just established. Let $\del$ be any small positive number, and choose
 $l$ large enough that $2^{1-l}(1-\xi)<\del$. We have shown that an integral bicongenial matrix
 $B_l=(b_{ij})$ exists for which (\ref{3.6}) holds. The matrix $B_l$ is of format $(R+1)\times (S+2)$,
 where $R=2^l(r-1)$ and $S=3R$. Define the linear forms $\bet_j$ as in (\ref{3.1}) and recall (\ref{3.2}).
 Applying (\ref{2.3}), invoking symmetry, and considering the underlying Diophantine system, we find that
 there is no loss in supposing that
$$J(P;B_l)\ll \oint |f(\bet_0)^6f(\bet_1)^2\ldots f(\bet_S)^2|\d\bfalp .$$
Let $D$ be the integral matrix underlying the $S+3$ forms $\bet_0,\bet_0,\bet_0,\bet_1,\ldots ,\bet_S$. Then $D$ is congenial of
 type $(2^l-1,r;r,3,3r)$, and one has $J(P;B_l)\ll I(P;D)$. Substituting the bound $J(P;B_l)\ll
 P^{3R+4+\eps}$ that follows from Corollary \ref{corollary2.5} into (\ref{3.6}), we obtain the estimate
$$K(P;C)\ll (P^{3+\xi})^{1-2^{-l}}(P^{3(2^l(r-1)+1)+1+\eps})^{2^{-l}}\ll 
P^{3r+\xi+(1-\xi)2^{-l}+\eps}.$$
In view of our assumed upper bound $2^{1-l}(1-\xi)<\del$, one therefore sees that
$$K(P;C)\ll P^{3r+\xi+\frac{1}{2}\del+\eps}\ll P^{3r+\xi+\del}.$$
The conclusion of the theorem now follows by taking $\del$ sufficiently small.
\end{proof}

\section{The Hardy-Littlewood method} In this section we turn to the proof of Theorem \ref{theorem1.1}.
 Let $(c_{ij})$ denote an integral $r\times s$ highly non-singular matrix with $r\ge 2$ and $s\ge 6r+1$. 
We define the linear forms $\gam_j=\gam_j(\bfalp)$ as in (\ref{3.4}), and for concision put
 $g_j=g(\gam_j(\bfalp))$ and $f_j=f(\gam_j(\bfalp))$. When $\grB\subseteq [0,1)^r$ is measurable, we
 then define
$$N(P;\grB)=\int_\grB g_1\ldots g_6f_7\ldots f_s\d\bfalp .$$
By orthogonality, it follows from this definition that $N(P;[0,1)^r)$ counts the number of integral solutions
 of the system (\ref{1.1}) with $x_1,\ldots ,x_6\in \calA(P,R)$ and $x_7,\ldots ,x_s\in [-P,P]$. In this section
 we prove the lower bound $N(P;[0,1)^r)\gg P^{s-3r}$, subject to the hypothesis that the system
 (\ref{1.1}) has non-zero $p$-adic solutions for all primes $p$. The conclusion of Theorem
 \ref{theorem1.1} then follows.\par

In pursuit of the above objective, we apply the Hardy-Littlewood method. Let $\grM$ denote the union of
 the intervals
$$\grM(q,a)=\{ \alp\in [0,1):|q\alp -a|\le (6P^2)^{-1}\},$$
with $0\le a\le q\le P$ and $(a,q)=1$, and let $\grm=[0,1)\setminus \grM$. In addition, write 
$L=\log \log P$, denote by $\grN$ the union of the intervals
$$\grN(q,a)=\{\alp \in [0,1):|q\alp -a|\le LP^{-3}\},$$
with $0\le a\le q\le L$ and $(a,q)=1$, and put $\grn=[0,1)\setminus \grN$. We summarise some useful
 estimates in this context in the form of a lemma.

\begin{lemma}\label{lemma4.1}
One has
$$\int_{\grM\setminus \grN}|f(\alp)|^5\d\alp \ll P^2L^{\eps-1/3}\quad \text{and}\quad 
\int_0^1|f(\alp)|^8\d\alp \ll P^5.$$
\end{lemma}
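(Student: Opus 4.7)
The plan is to handle the two estimates separately, each by classical means. The second is exactly the eighth-moment case of Hua's inequality: as $8 = 2^3$, three rounds of Weyl differencing following \cite[Lemma 2.5]{Vau1997} deliver $\int_0^1 |f(\alp;P)|^8\d\alp \ll P^{5+\eps}$, which matches the stated $P^5$ under the paper's convention that implicit $\eps$'s are tolerated.

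For the first, I would deploy the standard pruning argument for cubic Weyl sums. The key input is the classical major-arc approximation
$$f(\alp) = q^{-1}S(q,a)\,v(\alp - a/q) + O\bigl(q^{1/2+\eps}(1+P^3|q\alp-a|)^{1/2}\bigr)\qquad (\alp\in\grM(q,a)),$$
with $|S(q,a)|\ll q^{2/3}$ and $|v(\bet)|\ll\min(P, |\bet|^{-1/3})$, which yields the pointwise estimate $|f(\alp)| \ll P(q+P^3|q\alp-a|)^{-1/3} + P^{1/2+\eps}$ throughout $\grM$. Since on $\grM\setminus\grN$ either $q>L$ or $|q\alp-a|>LP^{-3}$, the quantity $q+P^3|q\alp-a|$ exceeds a constant multiple of $L$ uniformly, and in particular $\sup_{\grM\setminus\grN}|f|\ll PL^{-1/3}$.

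To convert this supremum bound into the sharp saving $L^{-1/3}$ paired with the critical exponent $P^2$, I would raise the approximation to the fifth power and integrate arc-by-arc, splitting according to whether $q\le L$ or $q>L$. In the first range the tail integral $\int_{|\bet|>LP^{-3}}(1+P^3|\bet|)^{-5/3}\d\bet \ll L^{-2/3}P^{-3}$ extracts the required $L^{-2/3}$ saving, which after summation against $\sum_{q\le L}\phi(q)q^{-5/3}$ produces the announced bound. The main obstacle will be the complementary range $L<q\le P$: the worst-case bound $|S(q,a)|\ll q^{2/3}$ alone contributes $\ll P^{7/3}$ rather than $P^2L^{-1/3+\eps}$, so the sharp bound requires average cancellation of the cubic Gauss sum across residues $a\bmod q$, equivalently the convergence of an appropriate fifth-moment singular series for cubes. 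The complete pruning scheme is classical and is carried out, for instance, in \cite[Ch.~4]{Vau1997}.
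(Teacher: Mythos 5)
Your treatment of the eighth-moment bound contains a genuine error. Hua's lemma gives $\int_0^1 |f(\alp)|^8\,\d\alp \ll P^{5+\eps}$, but the stated bound is $\ll P^5$ with no $\eps$, and the paper's $\eps$-convention does not license you to insert one: that convention governs the interpretation of statements that already contain an $\eps$, it does not mean that every bound may be degraded by $P^{\eps}$ at will. The distinction is not cosmetic here. In the application (Lemma \ref{lemma4.3}) the saving being extracted is only a power of $L=\log\log P$, namely $L^{-1/4}$; a stray factor $P^{\eps}$ would swamp this entirely and the pruning argument would collapse. The sharp bound $\ll P^5$ is Vaughan's theorem \cite[Theorem 2]{Vau1986} (the number of representations as a sum of eight cubes is, on the eighth-moment diagonal, $O(P^5)$), which is a substantially deeper result than Hua's inequality; it cannot be reproduced by ``three rounds of Weyl differencing.'' The paper simply cites it.

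Your outline for the first bound is on the right track but stops short at exactly the point where the work lies. You correctly identify that the naive use of $|S(q,a)|\ll q^{2/3}$ gives only $P^{7/3}$ over the range $L<q\le P$, and that the rescue comes from averaging the Gauss sums over $a$ and $q$ (equivalently, from the decomposition of $q$ into cube-free and cube-full parts, under which the contribution of $q>L$ is $O(L^{-c})$ for some $c>0$). This is precisely the content of \cite[Lemma 5.1]{Vau1989}, which is the reference the paper invokes; a pointer to ``\cite[Ch.~4]{Vau1997}'' is too vague, since the needed statement is a pruning estimate over $\grM\setminus\grN$ rather than a singular-series convergence result. So for the first half your strategy matches the source of the cited lemma but is left as a sketch with the key estimate unproved; for the second half the proposed derivation is incorrect and, if used, would invalidate the downstream argument.
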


\begin{proof} The first estimate follows as a special case of \cite[Lemma 5.1]{Vau1989}, and the second is
 immediate from \cite[Theorem 2]{Vau1986}, by orthogonality.
\end{proof}

Next we introduce a multi-dimensional set of arcs. Let $Q=L^{10r}$, and define the narrow set of major
 arcs $\grP$ to be the union of the boxes
$$\grP(q,\bfa)=\{ \bfalp\in [0,1)^r:|\alp_i-a_i/q|\le QP^{-3}\ (1\le i\le r)\},$$
with $0\le a_i\le q\le Q$ $(1\le i\le r)$ and $(a_1,\ldots ,a_r,q)=1$.

\begin{lemma}\label{lemma4.2} Suppose that the system (\ref{1.1}) admits non-zero $p$-adic solutions
 for each prime number $p$. Then one has $N(P;\grP)\gg P^{s-3r}$.
\end{lemma}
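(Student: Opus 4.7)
The plan is to carry out a standard Hardy--Littlewood major arc analysis on $\grP$. On each box $\grP(q,\bfa)$, write $\alp_i = a_i/q + \bet_i$ with $|\bet_i| \le QP^{-3}$, so each linear form decomposes as $\gam_j(\bfalp) = A_j(\bfa)/q + \phi_j(\bfbet)$ with $A_j(\bfa) = \sum_i c_{ij}a_i$ and $\phi_j(\bfbet) = \sum_i c_{ij}\bet_i$. On $\grP$ one has the familiar approximation
$$f(\gam_j) = q^{-1}S(q,A_j(\bfa))v(\phi_j(\bfbet)) + O(Q^{1+\eps}),$$
where $v(\phi) = \int_{-P}^P e(\phi\xi^3)\d\xi$, together with an analogous approximation for the smooth sum $g(\gam_j)$ from \cite{Woo2000}, replacing $v$ by a smooth archimedean analogue $w$ of order $\rho_\eta P$.

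First I would substitute these into the definition of $N(P;\grP)$. Telescoping the difference between the full product of Weyl sums and the product of their approximations one factor at a time, each error term can be dispatched by Cauchy--Schwarz together with the eighth moment in Lemma \ref{lemma4.1}, the sixth moment in Lemma \ref{lemma3.1}, and Theorem \ref{theorem3.3} applied to an appropriately chosen highly non-singular submatrix of $(c_{ij})$. The outcome is
$$N(P;\grP) = \grS(Q)\grJ(Q) + O(P^{s-3r}L^{-\del})$$
for some $\del>0$, where $\grS(Q)$ and $\grJ(Q)$ denote the natural truncated singular series and singular integral for (\ref{1.1}). Using standard Weyl-type bounds on $S(q,a)$ with $(a,q)=1$, together with the decay of $v(\phi)$ and $w(\phi)$ in $P^3\phi$, the tails from extending the truncations to the complete objects are also $O(P^{s-3r}L^{-\del})$, so that $\grS(Q)\grJ(Q) = \grS\grJ \cdot P^{s-3r}(1+o(1))$ with $\grS = \prod_p \sig_p$ an absolutely convergent Euler product.

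Finally, the positivity $\grS\grJ \gg 1$ is verified in two pieces. For the non-archimedean factor, \cite{ABC1992} yields $\sig_p = 1 + O(p^{-1-\del})$ for $p > 9^{r+1}$, so the Euler product is positive provided each small-prime factor is positive; for the remaining primes the hypothesised non-trivial $p$-adic solution, coupled with the high non-singularity of $(c_{ij})$, produces a non-singular $p$-adic solution by the usual pigeonhole/Hensel routine, whence $\sig_p > 0$. For the archimedean factor, the condition $s > 6r$ together with high non-singularity of $(c_{ij})$ supplies a non-singular real zero of (\ref{1.1}) in $(-1,1)^s$ (fix $s-r$ coordinates generically and invert a non-singular $r \times r$ block for the remaining $r$), whence an implicit-function argument gives $\grJ > 0$. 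Combining the displays yields $N(P;\grP) \gg P^{s-3r}$, as required.

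The main obstacle is the telescoping error analysis: at each replacement stage one must select a H\"older partition of the product $g_1\cdots g_6 f_7\cdots f_s$ so that the resulting mean value is amenable either to Lemma \ref{lemma4.1}, to Lemma \ref{lemma3.1}, or to Theorem \ref{theorem3.3}, and it is precisely the three smooth factors in the last of these that enable the differences involving an unapproximated $g$-factor to be absorbed. The savings of $L^{-\del}$ at each step come from having widened the arcs to $Q = L^{10r}$; balancing these savings against the $Q^{1+\eps}$ replacement error is the delicate bookkeeping that drives the pruning, though each individual estimate is classical.
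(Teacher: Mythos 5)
Your proposal correctly identifies the three structural ingredients — replace the exponential sums by their local approximations on $\grP$, then show the truncated singular series and singular integral converge to positive limits — and the treatments you sketch for the positivity of $\grS$ (pigeonhole/Hensel via \cite{DL1969} and \cite{ABC1992}) and of $\grJ$ (non-singular real zero plus Fourier's integral formula) match the paper. The genuine divergence is in the replacement step, and there you over-engineer the argument in a way that suggests a misreading of the geometry. The box $\grP$ is the \emph{narrow} set of major arcs: $Q=L^{10r}=(\log\log P)^{10r}$, so $\mathrm{meas}(\grP)\ll Q^{2r+1}P^{-3r}$ is genuinely tiny. The paper therefore needs nothing more than the pointwise error bounds — $g(\gam_j)-cq^{-1}S_jv_j\ll P(\log P)^{-1/2}$ from \cite[Lemma 8.5]{Woo1991} and $f(\gam_j)-q^{-1}S_jv_j\ll\log P$ from \cite[Theorem 4.1]{Vau1997} — combined with the trivial bound $P$ on each remaining factor, giving a pointwise discrepancy $\ll P^s(\log P)^{-1/2}$ whose integral over $\grP$ is already $\ll P^{s-3r}L^{-1}$. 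No Cauchy--Schwarz, no eighth moment, no Theorem~\ref{theorem3.3}, and no ``delicate bookkeeping'' are required here. Your proposed telescoping-with-mean-values argument also has a real snag you do not address: after a few telescoping steps some of the remaining factors are the approximations $q^{-1}S(q,\cdot)v(\cdot)$ rather than $f$ or $g$, so the mean value lemmata you cite (which concern $f$ and $g$ only) are not directly applicable without further work. In short, your heavy machinery belongs to the pruning from $\grV$ to $\grP$ (Lemma~\ref{lemma4.3}) and to the minor arcs (Lemma~\ref{lemma4.4}), not to the narrow-arc asymptotic of Lemma~\ref{lemma4.2}; recognising that the trivial-bound-times-measure argument suffices on $\grP$ is the point of choosing $Q$ so small.
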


\begin{proof}We begin by defining the auxiliary functions
$$S(q,a)=\sum_{r=1}^qe(ar^3/q)\quad \text{and}\quad v(\bet)=\int_{-P}^Pe(\bet \gam^3)\d\gam .$$
For $1\le j\le s$, put $S_j(q,\bfa)=S(q,\gam_j(\bfa))$ and $v_j(\bfbet)=v(\gam_j(\bfbet))$, and define
\begin{equation}\label{4.1}
A(q)=\underset{(q,a_1,\ldots ,a_r)=1}{\sum_{a_1=1}^q\cdots \sum_{a_r=1}^q}
q^{-s}\prod_{j=1}^sS_j(q,\bfa)\quad \text{and}\quad V(\bfbet)=\prod_{j=1}^sv_j(\bfbet).
\end{equation}
Finally, write $\calB(X)$ for $[-XP^{-3},XP^{-3}]^r$, and define
$$\grJ(X)=\int_{\calB(X)}V(\bfbet)\d\bfbet \quad \text{and}\quad \grS(X)=\sum_{1\le q\le X}A(q).$$

\par We prove first that there exists a positive constant $C$ with the property that
\begin{equation}\label{4.2}
N(P;\grP)-C\grS(Q)\grJ(Q)\ll P^{s-3r}L^{-1}.
\end{equation}
It follows from \cite[Lemma 8.5]{Woo1991} (see also \cite[Lemma 5.4]{Vau1989}) that there exists a
 positive constant $c=c(\eta)$ such that whenever $\bfalp \in \grP(q,\bfa)\subseteq \grP$, then
$$g(\gam_j(\bfalp))-cq^{-1}S_j(q,\bfa)v_j(\bfalp-\bfa/q)\ll P(\log P)^{-1/2}.$$
Under the same constraints on $\bfalp$, one finds from \cite[Theorem 4.1]{Vau1997} that
$$f(\gam_j(\bfalp))-q^{-1}S_j(q,\bfa)v_j(\bfalp-\bfa/q)\ll \log P.$$ 
Thus, whenever $\bfalp\in \grP(q,\bfa)\subseteq \grP$, one has
$$g_1\ldots g_6f_7\ldots f_s-c^6q^{-s}\prod_{j=1}^sS_j(q,\bfa)v_j(\bfalp-\bfa/q)\ll 
P^s(\log P)^{-1/2}.$$
The measure of the major arcs $\grP$ is $O(Q^{2r+1}P^{-3r})$, so that on integrating over $\grP$, we 
confirm the relation (\ref{4.2}) with $C=c^6$.\par

We next discuss the singular integral $\grJ(Q)$. By applying (\ref{2.3}), we find that
\begin{equation}\label{4.2bw}
V(\bfbet)\ll \sum_{1\le j_1<\ldots <j_r\le s}|v_{j_1}(\bfbet)\ldots v_{j_r}(\bfbet)|^{s/r}.
\end{equation}
Recall from \cite[Theorem 7.3]{Vau1997} that $v(\bet)\ll P(1+P^3|\bet|)^{-1/3}$. Since $(c_{ij})$ is
 highly non-singular and $s\ge 6r+1$, a change of variables reveals that $V(\bfbet)$ is integrable, that the 
limit $\grJ=\underset{X\rightarrow \infty}\lim \grJ(X)$ exists, and that $\grJ\ll P^{s-3r}$. Write 
${\widehat \calB}(X)=\dbR^r\setminus \calB(X)$. Then by applying (\ref{4.2bw}), we discern that there are 
distinct indices $j_1,\ldots ,j_r$ such that
$$\grJ-\grJ(X)=\int_{{\widehat \calB}(X)}V(\bfbet)\d\bfbet \ll \int_{{\widehat \calB}(X)}|v_{j_1}
(\bfbet)\ldots v_{j_r}(\bfbet)|^{s/r}\d\bfbet .$$
The linear independence of the $\gam_j$ ensures that whenever $\bfbet \in {\widehat \calB}(X)$, then 
for some index $l$ with $1\le l\le r$, one has $|\gam_{j_l}(\bfbet)|>X^{1/2}P^{-3}$. Consequently, the
 hypothesis $s\ge 6r+1$ again ensures via a change of variables that
\begin{align*}
\grJ-\grJ(X)&\ll \Bigl( \sup_{\bfbet\in {\widehat \calB}(X)}|v_{j_1}(\bfbet)\ldots v_{j_r}(\bfbet)|\Bigr)
 \int_{{\widehat \calB}(X)}|v_{j_1}(\bfbet)\ldots v_{j_r}(\bfbet)|^{(s-r)/r}\d\bfbet \\
&\ll P^sX^{-1/6}\int_{\dbR^r}\prod_{j=1}^r(1+P^3|\tet_i|)^{-(s-r)/(3r)}\d\bftet \ll P^{s-3r}X^{-1/6}.
\end{align*}
The system of equations (\ref{1.1}) possesses a non-zero real solution in $[-1,1]^s$, and this must be 
non-singular since $(c_{ij})$ is highly non-singular. An application of Fourier's integral formula (see
 \cite[Chapter 4]{Dav2005} and \cite[Lemma 30]{DL1969}) therefore leads to the lower bound 
$\grJ\gg P^{s-3r}$. Thus we may conclude that
\begin{equation}\label{4.3}
\grJ(Q)\gg P^{s-3r}+O(P^{s-3r}Q^{-1/6})\gg P^{s-3r}.
\end{equation}

\par We turn next to the singular series $\grS(Q)$. It follows from \cite[Theorem 4.2]{Vau1997} that
 whenever $(q,a)=1$, one has $S(q,a)\ll q^{2/3}$. Given a summand $\bfa$ in the formula for $A(q)$
 provided in (\ref{4.1}), write $h_j=(q,\gam_j(\bfa))$. Then we find that
$$A(q)\ll \underset{(q,a_1,\ldots ,a_r)=1}{\sum_{a_1=1}^q\cdots \sum_{a_r=1}^q}q^{-s/3}
(h_1\ldots h_s)^{1/3}.$$
By hypothesis, we have $s/(3r)\ge 2+1/(3r)$. The proof of \cite[Lemma 23]{DL1969} is therefore easily
 modified to show that $A(q)\ll q^{-1-1/(6r)}$. Thus, the series $\grS=\underset{X\rightarrow \infty}\lim
 \grS(X)$ is absolutely convergent and
$$\grS-\grS(Q)\ll \sum_{q>Q}q^{-1-1/(6r)}\ll Q^{-1/(6r)}\ll L^{-1}.$$
The system (\ref{1.1}) has non-zero $p$-adic solutions for each prime $p$, and these are non-singular 
 since $(c_{ij})$ is highly non-singular. A modification of the proof of \cite[Lemma 31]{DL1969} therefore
 shows that $\grS>0$, whence $\grS(Q)=\grS+O(L^{-1})>0$. The proof of the lemma is completed by
 recalling (\ref{4.3}) and substituting into (\ref{4.2}) to obtain the bound $N(P;\grP)\gg
 P^{s-3r}+O(P^{s-3r}L^{-1})$.
\end{proof}

In order to prune a wide set of major arcs down to the narrow set $\grP$ just considered, we introduce the
 auxiliary sets of arcs
$$\grM_j=\{\bfalp \in [0,1)^r:\gam_j(\bfalp)\in \grM+\dbZ\},$$
and we put $\grV=\grM_7\cap\grM_{8}\cap\ldots \cap \grM_s$. In addition, we define
 $\grm_j=[0,1)^r\setminus \grM_j$ $(7\le j\le s)$, and write $\grv=[0,1)^r\setminus \grV$. Finally, for any 
positive integer $n$, when
 $\bfome\in [1,s]^n$, we define
$$\grK_\bfome=\{\bfalp\in \grV\setminus \grP:\gam_{\ome_m}(\bfalp)\in \grn+\dbZ\ (1\le m\le n)\}.
$$

\begin{lemma}\label{lemma4.3} One has $N(P;\grV\setminus \grP)\ll P^{s-3r}L^{-1/4}$.
\end{lemma}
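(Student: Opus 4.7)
My plan has two main steps: first, a dissection of $\grV \setminus \grP$ that exploits the high non-singularity of the coefficient matrix; and second, a Cauchy-Schwarz estimate combining Theorem \ref{theorem3.3} and Lemma \ref{lemma4.1}.

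The first step is to show the covering
$$\grV \setminus \grP \ \subseteq\ \bigcup_{j_0 = 9}^{s} \grK_{(j_0)}.$$
Take $\bfalp \in \grV \setminus \grP$. By definition of $\grV$ we have $\gam_j(\bfalp) \in \grM + \dbZ$ for each $j \in \{9, \ldots, s\}$. If additionally all of these $\gam_j(\bfalp)$ were to lie in $\grN + \dbZ$, then since the coefficient matrix is highly non-singular and $s - 8 \ge r$, any $r$ of these linear forms are linearly independent; Cramer's rule would then express each $\alp_i$ as $a_i/q$ with common denominator $q \le L^r \le Q$ plus error $O(LP^{-3}) \le QP^{-3}$, forcing $\bfalp \in \grP$ and contradicting the assumption. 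Hence at least one $\gam_{j_0}(\bfalp) \in \grM \setminus \grN \subseteq \grn + \dbZ$ with $j_0 \ge 9$. Combining this with the symmetry of the hypotheses under permutation of the columns $7, \ldots, s$, it suffices to bound
$$T \ =\ \int_{\grK_{(s)}} |g_1 \cdots g_6 f_7 \cdots f_s|\,\d\bfalp.$$

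Next, I decompose the integrand as $|H_1| \cdot |H_2| \cdot |E|$, where $H_1$ and $H_2$ are each products of three smooth sums and $3r - 3$ classical sums whose associated $r \times 3r$ column submatrices are highly non-singular (such a factorisation exists by Lemma \ref{lemma2.1}), with $f_s$ placed inside $H_1$, and $E$ collects the $s - 6r$ remaining classical sums. Applying Cauchy-Schwarz gives
$$T \ \le\ \Bigl(\int_{\grK_{(s)}} |H_1|^2\,\d\bfalp\Bigr)^{1/2} \Bigl(\int |H_2 E|^2\,\d\bfalp\Bigr)^{1/2}.$$
The second factor is estimated by Theorem \ref{theorem3.3} applied to $H_2$ together with the trivial bound $|f_j| \ll P$ on each of the $s - 6r$ sums in $E$, yielding $\ll P^{s - 6r} \cdot P^{(3r + \xi + \eps)/2}$. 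For the first factor I perform a unimodular change of variables setting $\gam_s$ equal to one of the integration coordinates; this separates $|f_s|^2$ from the remainder of $|H_1|^2$, after which Hölder's inequality together with the one-dimensional $L^5$ mean value $\int_{\grM \setminus \grN} |f|^5 \d\alp \ll P^2 L^{-1/3 + \eps}$ from Lemma \ref{lemma4.1} yields the requisite $L$-saving. The residual factor, involving the $3r - 1$ remaining Weyl sums of $H_1$, is bounded by augmenting with the $f_s$ column and invoking Theorem \ref{theorem3.3}. After arithmetic this combines with the second factor to give $T \ll P^{s - 3r} L^{-1/4 + \eps}$.

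The main obstacle is the calibration of the Hölder weights in bounding the first factor: each use of Theorem \ref{theorem3.3} costs a factor of $P^{\xi}$, and a naive Cauchy-Schwarz decomposition produces $P^{s - 3r + \xi + \eps} L^{-1/3}$, exceeding the target by $P^{\xi}$. Since Lemma \ref{lemma4.1} supplies only $L$-savings, the Cauchy-Schwarz must be arranged so that the excess $P^\xi$ is absorbed entirely on the $H_1$ side where the $\gam_s \in \grM \setminus \grN$ constraint is active; the $L^5$ mean value then converts part of that excess into the needed $L^{-1/4+\eps}$ factor.
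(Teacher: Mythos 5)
Your proposal diverges from the paper's argument in a way that leaves a genuine gap. The paper does \emph{not} invoke Theorem \ref{theorem3.3} in the proof of Lemma \ref{lemma4.3} at all, precisely because of the $P^\xi$ excess you identify at the end: a power-of-$P$ loss cannot be compensated by a $\log\log P$ saving, and your proposed ``calibration of the H\"older weights'' does not resolve this. In quantitative terms, if the second Cauchy--Schwarz factor is bounded via Theorem \ref{theorem3.3} and trivial estimates by $P^{s-6r+(3r+\xi+\eps)/2}$, then to reach the target you must show $\int_{\grK_{(s)}}|H_1|^2\,\d\bfalp \ll P^{3r-\xi-\eps}L^{-1/2}$, a saving of $P^{2\xi+2\eps}L^{1/2}\approx P^{1/2}L^{1/2}$ beyond what Theorem \ref{theorem3.3} delivers. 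Raising $|f_s|^2$ to $|f_s|^5$ by H\"older (exponents $5/2$ and $5/3$) does give access to the $L^5$ bound on $\grM\setminus\grN$, but the conjugate factor becomes $\int |g_{i_1}g_{i_2}g_{i_3}f_{i_4}\cdots f_{i_{3r-1}}|^{10/3}\d\bfalp$, a mean value of a shape for which the paper supplies no estimate; Theorem \ref{theorem3.3} only treats the $L^2$ norm of a product of exactly $3r$ forms, and ``augmenting with the $f_s$ column'' re-inserts the factor you removed. The arithmetic you leave as an ``obstacle'' is in fact the crux, and it does not close.

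The missing idea is that the pruning constraint forces not merely \emph{one} form into $\grM\setminus\grN$, but a large collection. Since any $r$ of the forms $\gam_j$ lying in $\grN+\dbZ$ would already place $\bfalp\in\grP$, at most $r-1$ of the indices $\nu\in[7,s]$ can have $\gam_\nu\in\grN+\dbZ$; hence at least $s-6-r\ge 5(r-1)$ of them lie in $\grn+\dbZ$. The paper then applies the elementary inequality (\ref{2.3}) together with trivial bounds to reduce to an integrand of the shape $|k_l^8 f_{\sig_1}^5\cdots f_{\sig_{r-1}}^5|$ over $\grK_{\bfsig}$, where the $r$ forms $\gam_l,\gam_{\sig_1},\ldots,\gam_{\sig_{r-1}}$ are linearly independent by high non-singularity. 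A change of variables then lets the integral factor completely into one-dimensional integrals: one copy of $\int_0^1|f|^8\,\d\alp\ll P^5$ and $r-1$ copies of $\int_{\grM\setminus\grN}|f|^5\,\d\alp\ll P^2L^{\eps-1/3}$. The prefactor $P^{s-5r-3}$ and these one-dimensional bounds deliver exactly $P^{s-3r}L^{(r-1)(\eps-1/3)}\ll P^{s-3r}L^{-1/4}$, with no recourse to the multidimensional mean values of \S\S2--3 and hence no $P^\xi$ loss to absorb. Your covering step (showing at least one index is in $\grn+\dbZ$) is correct but too weak; and your reliance on Theorem \ref{theorem3.3} is the wrong tool here --- it is reserved for the genuine minor arc contribution in Lemma \ref{lemma4.4}, where the Weyl-sum saving $P^{3/4+\eps}$ overcomes $P^\xi$ with room to spare.
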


\begin{proof} Let $\bfalp\in \grV\setminus \grP$, and suppose temporarily that $\gam_{j_m}\in
 \grN+\dbZ$ for $r$ distinct indices $j_m\in [7,s]$. For each $m$ there is a natural number $q_m\le L$
 having the property that $\|q_m\gam_{j_m}\|\le LP^{-3}$. With $q=q_1\ldots q_r$, one has $q\le L^r$
 and $\|q\gam_{j_m}\|\le L^rP^{-3}$. Next eliminating between $\gam_{j_1},\ldots ,\gam_{j_r}$ in
 order to isolate $\alp_1,\ldots ,\alp_r$, one finds that there is a positive integer $\kap$, depending at most
 on $(c_{ij})$, such that $\|\kap q\alp_l\|\le L^{r+1}P^{-3}$ $(1\le l\le r)$. Since $\kap q\le L^{r+1}$, it
 follows that $\bfalp\in \grP$, yielding a contradiction to our hypothesis that $\bfalp\in \grV\setminus \grP$.
 Thus $\gam_\nu(\bfalp)\in \grn+\dbZ$ for at least $s-6-r\ge 5(r-1)$ of the suffices $\nu$ with $7\le \nu
 \le s$. Then for some tuple $\bfnu=(\nu_1,\ldots ,\nu_{5r-5})$ of distinct integers $\nu_m\in [7,s]$, one
 has
$$N(P;\grV\setminus \grP)\ll \int_{\grK_\bfnu}|g_1\ldots g_6f_7\ldots f_s|\d\bfalp .$$

\par By symmetry, we may suppose that $\bfnu=(9,\ldots ,5r+3)$. Let $k_l$ denote $g_l$ when 
$1\le l\le 6$, and $f_l$ when $l=7,8$. Then combining (\ref{2.3}) with a trivial estimate for $|f(\alp)|$,
 one finds that for some tuple $(\sig_1,\ldots ,\sig_{r-1})$ of distinct integers $\sig_m\in [9,5r+3]$, and
 some integer $l$ with $1\le l\le 8$, one has
$$N(P;\grV\setminus \grP)\ll P^{s-5r-3}\int_{\grK_\bfsig}|k_l^8f_{\sig_1}^5
\ldots f_{\sig_{r-1}}^5|\d\bfalp .$$
By changing variables, considering the underlying Diophantine equations, and applying Lemma
 \ref{lemma4.1}, we deduce that
\begin{align*}
N(P;\grV\setminus \grP)&\ll P^{s-5r-3}\Bigl( \int_0^1|f(\alp)|^8\d\alp \Bigr) 
\Bigl( \int_{\grM\setminus \grN}|f(\alp)|^5\d\alp \Bigr)^{r-1}\\
&\ll P^{s-5r-3}(P^5)(P^2L^{\eps-1/3})^{r-1}\ll P^{s-3r}L^{-1/4},
\end{align*}
and the proof of the lemma is complete.
\end{proof}

\begin{lemma}\label{lemma4.4} There is a positive number $\del$ such that $N(P;\grv)\ll P^{s-3r-\del}$.
\end{lemma}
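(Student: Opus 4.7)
The plan is to exploit that $\grv$ is the union of $\grm_j$ for $9 \le j \le s$, and on each such set the factor $f(\gam_j)$ satisfies Weyl's inequality bound $\ll P^{3/4+\eps}$. Extracting this factor reduces the estimation to an integral over the full cube $[0,1)^r$, which can then be handled via Theorem \ref{theorem3.3} after a single Cauchy-Schwarz pairing. Since $\grv = \bigcup_{j=9}^s \grm_j$, one has
$$N(P;\grv) \le \sum_{j=9}^{s} \int_{\grm_j} |g_1 \cdots g_6 f_7 \cdots f_s|\d\bfalp.$$
Fix any index $j \in [9,s]$. For $\bfalp \in \grm_j$, Weyl's inequality yields $|f(\gam_j(\bfalp))| \ll P^{3/4+\eps}$, since $\gam_j(\bfalp) \in \grm$ modulo $\dbZ$. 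Extracting this factor and enlarging the domain of integration to $[0,1)^r$ gives
$$\int_{\grm_j} |g_1 \cdots g_6 f_7 \cdots f_s|\d\bfalp \ll P^{3/4+\eps} \int_{[0,1)^r} |g_1 \cdots g_6 f_7 \cdots f_{j-1} f_{j+1} \cdots f_s|\d\bfalp.$$

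The remaining integrand has $s - 1 \ge 6r$ exponential-sum factors. Bounding $s - 1 - 6r$ of the $f$-factors trivially by $O(P)$, one retains six copies of $g$ and $6r - 6$ copies of $f$, making $6r$ factors in total. Splitting these via Cauchy-Schwarz into two blocks of $3r$, each containing three $g$'s and $3r - 3$ $f$'s, yields
$$\int_{[0,1)^r} |g_1 g_2 g_3 g_4 g_5 g_6 f_{i_1} \cdots f_{i_{6r-6}}|\d\bfalp \le K(P;C_1)^{1/2} K(P;C_2)^{1/2},$$
where $C_1$ has columns $\bfc_1, \bfc_2, \bfc_3, \bfc_{i_1}, \ldots, \bfc_{i_{3r-3}}$ and $C_2$ has columns $\bfc_4, \bfc_5, \bfc_6, \bfc_{i_{3r-2}}, \ldots, \bfc_{i_{6r-6}}$. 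Both are $r \times 3r$ submatrices of the highly non-singular $(c_{ij})$, hence highly non-singular themselves, so Theorem \ref{theorem3.3} delivers $K(P;C_k) \ll P^{3r + \xi + \eps}$ for $k=1,2$.

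Combining these estimates,
$$N(P;\grv) \ll P^{3/4 + \eps} \cdot P^{s-1-6r} \cdot P^{3r+\xi+\eps} = P^{s - 3r - 1/4 + \xi + 2\eps} = P^{s - 3r - \tau + 2\eps},$$
using $\xi = \tfrac{1}{4} - \tau$ with $\tau > 0$. Taking $\eps < \tau/4$ produces the conclusion with, for instance, $\del = \tau/2$. The main subtlety is verifying that the $r \times 3r$ submatrices produced by the Cauchy-Schwarz split inherit the highly non-singular property; this is immediate from the definition, since any $r$ of their columns are in particular $r$ columns of $(c_{ij})$. Apart from this bookkeeping, the argument is a clean packaging of the deep mean-value estimate already supplied by Theorem \ref{theorem3.3}.
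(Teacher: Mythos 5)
Your argument is correct and matches the paper's proof in essence: extract one $f(\gam_j)$ on the minor arcs via Weyl's inequality, bound $s-1-6r$ of the remaining $f$-factors trivially, reduce the surviving $6r$ factors (six $g$'s and $6r-6$ $f$'s) to mean values of the form $K(P;C)$ over highly non-singular $r\times 3r$ submatrices, and invoke Theorem \ref{theorem3.3}. The only cosmetic difference is that you split via Cauchy--Schwarz in integral form, whereas the paper applies the pointwise inequality (\ref{2.3}) with $n=2$ before integrating; these are interchangeable and yield identical exponents.
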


\begin{proof} If $\bfalp\in \grv$, then for some index $j$ with $7\le j\le s$, one has
 $\gam_j(\bfalp)\not\in \grM+\dbZ$, and so $\bfalp\in \grm_j$. Thus, combining (\ref{2.3}) with a trivial
 estimate for $|f(\alp)|$, we find that for some suffix $j\in [7,s]$, and some tuple $(j_1,\ldots,j_{3r})$ with
$$1\le j_1<j_2<j_3\le 6<j_4<\ldots <j_{3r}\le s,$$
one has
\begin{equation}\label{4.4}
N(P;\grv)\ll P^{s-6r-1}\sup_{\bfalp\in \grm_j}|f(\gam_j(\bfalp))|\oint 
|g_{j_1}g_{j_2}g_{j_3}f_{j_4}\ldots f_{j_{3r}}|^2\d\bfalp .
\end{equation}
The matrix underlying the linear forms $\gam_{j_1},\ldots ,\gam_{j_{3r}}$ is highly non-singular, and so
 we may apply Theorem \ref{theorem3.3} to estimate the integral on the right hand side of (\ref{4.4}).
 Moreover, by Weyl's inequality (see \cite[Lemma 2.4]{Vau1997}), one has
$$\sup_{\bfalp \in \grm_j}|f(\gam_j(\bfalp))|\le \sup_{\bet \in \grm}|f(\bet)|\ll P^{3/4+\eps}.$$
We therefore conclude that for some positive number $\del$, one has
$$N(P;\grv)\ll P^{s-6r-1}(P^{3/4+\eps})(P^{3r+\xi+\eps})\ll P^{s-3r-\del}.$$
This completes the proof of the lemma.\end{proof}

By combining Lemmata \ref{lemma4.2}, \ref{lemma4.3} and \ref{lemma4.4}, we infer that whenever
 the system (\ref{1.1}) possesses a non-zero $p$-adic solution, one has
\begin{align*}
N(P)&=N(P;\grP)+N(P;\grV\setminus \grP)+N(P;\grv)\\
&\gg P^{s-3r}+O(P^{s-3r}L^{-1/4}+P^{s-3r-\del})\gg P^{s-3r}.
\end{align*}
This completes our proof of Theorem \ref{theorem1.1}.

We remark that the condition in Theorem \ref{theorem1.1} that $(c_{ij})$ be highly 
non-singular can certainly be relaxed. Let us refer to the number of columns lying in a given 
one dimensional subspace of the column space of $(c_{ij})$ as the {\it multiplicity} of that 
subspace. The discussion of \S\S2 and 3 would suffer no ill consequences were $(c_{ij})$ to 
satisfy the condition that the maximum multiplicity be $2$. In order to see this, one has 
simply to note that in such circumstances, the mean value estimates relevant to the application 
of the Hardy-Littlewood method can be related, via H\"older's inequality, to mean values of the 
shape (\ref{3.5}). We note in this context that the matrix $(c_{ij})$ occuring in Theorem 
\ref{theorem1.1} is of course different from that occuring in Theorem \ref{theorem3.3}. With 
rather greater effort in a more cumbersome argument, this maximum multiplicity $2$ could 
be increased to $3$, and even several multiplicities of $4$ can be tolerated. This and further 
refinements are topics that we intend to pursue on a future occasion.

\noindent {\bf Acknowledgement.} The authors are grateful to the referees for the 
extreme care taken in reviewing this paper, and in particular for numerous suggestions 
which have clarified our exposition and prompted significant corrections.

\bibliographystyle{amsbracket}

\begin{thebibliography}{18}

\bibitem{ABC1992}
O. D. Atkinson, J. Br\"udern and R. J. Cook, \emph{Simultaneous additive congruences to a large prime modulus}, Mathematika \textbf{39} (1992), 1--9.

\bibitem{Bak1989}
R. C. Baker, \emph{Diagonal cubic equations III}, Proc. London Math. Soc. (3) \textbf{58} (1989), 495--518. 

\bibitem{BC1992}
J. Br\"udern and R. J. Cook, \emph{On simultaneous diagonal equations and inequalities}, Acta Arith. \textbf{62} (1992), 125--149.

\bibitem{BW2002}
J. Br\"udern and T. D. Wooley, \emph{Hua's Lemma and simultaneous diagonal equations}, Bull. London Math. Soc. \textbf{34} (2002), 279--283.

\bibitem{BW2007}
J. Br\"udern and T. D. Wooley, \emph{The Hasse principle for pairs of diagonal cubic forms}, Ann. of Math. (2) \textbf{166} (2007), 865--895.

\bibitem{Dav2005}
H. Davenport, \emph{Analytic methods for Diophantine equations and Diophantine inequalities}, 2nd edition, Cambridge University Press, Cambridge, 2005.

\bibitem{DL1966}
H. Davenport and D. J. Lewis, \emph{Cubic equations of additive type}, Philos. Trans. Roy. Soc. London Ser. A \textbf{261} (1966), 97--136.

\bibitem{DL1969}
H. Davenport and D. J. Lewis, \emph{Simultaneous equations of additive type}, Philos. Trans. Roy. Soc. London Ser. A \textbf{264} (1969), 557--595.

\bibitem{Gow2001}
W. T. Gowers, \emph{A new proof of Szemer\'edi's theorem}, Geom. Funct. Anal. \textbf{11} (2001), 
no. 3, 465--588.

\bibitem{HB1998}
D. R. Heath-Brown, \emph{The circle method and diagonal cubic forms}, Phil. Trans. Roy. Soc. London Ser. A \textbf{356} (1998), 673--699.

\bibitem{Hoo1986}
C. Hooley, \emph{On Waring's problem}, Acta Math. \textbf{157} (1986), 49--97.

\bibitem{Hoo1996}
C. Hooley, \emph{On Hypothesis $K^*$ in Waring's problem}, Sieve methods, exponential sums, and their applications in number theory (Cardiff, 1995), pp. 175--185, Cambridge University Press, 1997.

\bibitem{Vau1986}
R. C. Vaughan, \emph{On Waring's problem for cubes}, J. Reine Angew. Math. \textbf{365} (1986), 122--170.

\bibitem{Vau1989}
R. C. Vaughan, \emph{A new iterative method in Waring's problem}, Acta Math. \textbf{162} (1989), 1--71.

\bibitem{Vau1997}
R. C. Vaughan, \emph{The Hardy-Littlewood method}, 2nd edition, Cambridge University Press, Cambridge, 1997.

\bibitem{Woo1991}
T. D. Wooley, \emph{On simultaneous additive equations, II}, J. Reine Angew. Math. \textbf{419} (1991), 141--198.

\bibitem{Woo2000}
T. D. Wooley, \emph{Sums of three cubes}, Mathematika \textbf{47} (2000), 53--61.

\end{thebibliography}
\providecommand{\bysame}{\leavevmode\hbox to3em{\hrulefill}\thinspace}

\end{document}